\newtheorem{theorem}{Theorem}[section]
\newtheorem{lemma}[theorem]{Lemma}
\newtheorem{conjecture}[theorem]{Conjecture}
\newtheorem{question}[theorem]{Question}
\theoremstyle{definition}
\definecolor{myRed}{rgb}{0.68, 0.05, 0.0}
\colorlet{myBlue}{blue!70!black}
\colorlet{myViolet}{myBlue!55!myRed}
\definecolor{darkraspberry}{rgb}{0.53, 0.15, 0.34}
\definecolor{olive}{rgb}{0.42, 0.56, 0.14}
\DeclareMathOperator{\tw}{tw}
\DeclareMathOperator{\sn}{sn}
\DeclareMathOperator{\ta}{tree-\alpha}
\title{On treewidth and  maximum cliques}
\author[1]{Maria Chudnovsky}
\author[2]{Nicolas~Trotignon}
\affil[1]{Princeton University, Princeton, NJ 08544, USA}
\affil[2]{Univ Lyon, EnsL, CNRS, LIP, F-69342, Lyon Cedex 07, France}
\date{\today}
\begin{document}
\maketitle

\begin{abstract}
  We construct classes of graphs that are variants of the so-called
  \emph{layered wheel}.  One of their key properties is that while the
  treewidth is bounded by a function of the clique number, the
  construction can be adjusted to make the dependence grow arbitrarily.
  Some of these classes provide counter-examples to several
  conjectures.  In particular, the construction includes  hereditary classes of
  graphs whose treewidth is bounded by a function of the clique number
  while the tree-independence number is unbounded, thus disproving a
  conjecture of Dallard, Milani\v c and \v Storgel [Treewidth versus
  clique number. II. Tree-independence number. {\it Journal of
    Combinatorial Theory, Series B}, 164:404--442, 2024]. The
  construction can be further adjusted to provide, for any fixed
  integer $c$, graphs of arbitrarily large treewidth that contain no
  $K_c$-free graphs of high treewidth, thus disproving a conjecture of
  Hajebi [Chordal graphs, even-hole-free graphs and sparse
  obstructions to bounded treewidth, arXiv:2401.01299, 2024].
\end{abstract}

\section{Introduction}

Graphs in this paper are oriented and infinite (with neither loops nor
multiple edges).  However, this is only for technical reasons and most of
our results will be about finite and simple graphs.

A \emph{clique} in a graph is a set of pairwise adjacent vertices and
an \emph{independent set} is a set of pairwise non-adjacent vertices.
The maximum number of vertices in a clique (resp.\ independent set)
in a graph $G$ is denoted by $\omega(G)$ (resp.\ $\alpha(G)$).  We
denote by $\chi(G)$ the \emph{chromatic number of $G$}, that is the
minimum number of colors needed to color vertices of $G$ in such a way
that adjacent vertices receive different colors.  A \emph{hole} in a
graph is a chordless cycle of length at least~4.  It is \emph{even} if
it contains an even number of vertices.  We denote by $N(v)$ the
neighborhood of a vertex $v$ and set $N[v]= N(v) \cup \{v\}$.  A class of
graphs is \emph{hereditary} if it closed under taking induced
subgraphs.

A \emph{tree decomposition} of a graph $G$ is a pair
$\mathcal T = (T, (X_s)_{s\in V (T )})$ where $T$ is a tree and every
node $s \in T$ is assigned a set $X_s \subseteq V (G)$ called a
\emph{bag} such that the following conditions are satisfied: every
vertex is in at least one bag, for every edge $uv \in E(G)$ there
exists a bag $X_s$ such that $\{u, v \} \subseteq X_s$, and for every
vertex $u \in V (G)$, the set $\{s \in V (T) \mid u \in X_s\}$ induces a
connected subgraph of $T$.  The \emph{width} of $\mathcal T$ is the
maximum value of $|X_s| - 1$ over all $s \in V (T)$. The
\emph{independent width} of $\mathcal T$ is the maximum value of
$\alpha(X_s)$ over all $s \in V (T)$. The \emph{treewidth} of a graph
$G$, denoted by $\tw(G)$, is the minimum width of a tree decomposition
of $G$. The \emph{tree-independence number} of a graph $G$, denoted by
$\ta(G)$, is the minimum independent width of a tree decomposition of
$G$.  It was first defined by Yolov
in~\cite{doi:10.1137/1.9781611975031.16} and rediscovered
independently by Dallard, Milani\v c and \v Storgel
in~\cite{DBLP:journals/jctb/DallardMS24}. The treewidth, and more
recently the tree-independence number, attracted some attention, see
for instance the introduction
of~\cite{DBLP:journals/jctb/DallardMS24} or~\cite{DBLP:journals/jgt/AbrishamiACHSV24a}.

The main contribution of this paper is a variant of the so-called
\emph{layered wheel}.  It was first introduced by Sintiari and
Trotignon in~\cite{DBLP:journals/jgt/SintiariT21} to provide graphs of
arbitrarily large treewidth that exclude several kinds of induced
subgraphs, such as $K_4$ and even holes, or triangles and thetas (not
worth defining here).  Our variant may contain cliques of any
size. For every integer $\ell \geq 4$, we construct a variant that
contains only holes of length at least $\ell$.  Our construction
provides answers to questions and counter-examples to conjectures due
to different authors, all about the treewidth and the
tree-independence number in hereditary classes of graphs, as we
explain now.

\subsection*{Conjectures and questions}

A hereditary class of graphs $\cal C$ is said to be
$(\tw, \omega)$-bounded if there exists a function $g$ such that the
treewidth of any graph $G \in \cal C$ is at most $g(\omega(G))$. In
\cite{dallardMS:tw1} and \cite{DMS_JCTB2024}, Dallard, Milani\v c and \v Storgel
asked whether every $(tw, \omega)$-bounded class of graphs is in fact
\emph{polynomially $(tw, \omega)$-bounded}.  We rephrase this question
formally as follows.

\begin{question}[Dallard, Milani\v c and \v Storgel, see
{\cite[Question 8.4]{DMS_JCTB2024}}]
  \label{Q:allFunctions}
  For every $(\tw, \omega)$-bounded class of graphs $\cal C$, does
  there exist a polynomial $g$ such that every graph $G\in \cal C$
  satisfies $\tw(G) \leq g(\omega(G))$ ?
\end{question}

Our construction provides a negative answer to this question, see
\cref{th:allFunctions} below.  In~\cite[Lemma
3.2]{DBLP:journals/jctb/DallardMS24} it is observed that the answer is
affirmative for classes with bounded tree-independence number.  It is also
observed that hereditary classes of graphs with bounded
tree-independence number are $(\tw, \omega)$-bounded (this is an easy
consequence of Ramsey theorem, see~\cite{DMS_JCTB2024}).  The
following conjecture is proposed by Dallard, Milani\v c and \v
Storgel.

\begin{conjecture}[Dallard, Milani\v c and \v Storgel, see
  {\cite[Conjecture 8.5]{DMS_JCTB2024}}]
  \label{conj:dallardMiSto}
  Let $\cal C$ be a hereditary graph class. Then $G$ is
  $(\tw, \omega)$-bounded if and only if $\cal C$ has bounded
  tree-independence number.
\end{conjecture}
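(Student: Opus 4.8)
The plan is to \emph{refute} \cref{conj:dallardMiSto} rather than to prove it. One implication is already known and recorded in the excerpt: a hereditary class of bounded tree-independence number is $(\tw,\omega)$-bounded, by a Ramsey argument (a bag of bounded independence number but large size contains a large clique). Hence the biconditional can only fail in the converse direction, and to disprove it I would produce a single hereditary class $\mathcal C$ that is $(\tw,\omega)$-bounded but has unbounded tree-independence number. The vehicle is the layered-wheel variant announced in the introduction: a family $(G_n)_n$ whose members have growing clique number and treewidth, with $\tw(G_n)$ controlled by a fixed function $g$ of $\omega(G_n)$, yet with $\ta(G_n)\to\infty$. I would let $\mathcal C$ be the hereditary closure of $\{G_n:n\ge 1\}$, so that hereditariness is free and only the two quantitative properties remain. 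Note that since $\ta(G)\le \tw(G)+1$ always, unbounded $\ta$ forces unbounded treewidth and hence unbounded clique number along the family; large cliques are therefore the unavoidable engine of the construction.

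For the upper bound I would read a tree decomposition off the recursive/layered construction, processing the layers in order so that at every stage the ``live frontier'' separating the processed part from the rest is a bounded number of clique-hubs. Each bag then has size $O(g(\omega(G_n)))$, giving $\tw(G_n)\le g(\omega(G_n))$. The delicate point is that $(\tw,\omega)$-boundedness must survive the hereditary closure: deleting vertices can lower $\omega$ faster than it lowers $\tw$, so monotonicity of treewidth is not enough on its own. I would instead show that any induced subgraph $H$ in which the large cliques have been broken also loses its high-treewidth part, so that $\tw(H)$ is again bounded by $g(\omega(H))$.

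The heart of the disproof is the lower bound $\ta(G_n)\to\infty$: every tree decomposition of $G_n$ must have a bag containing a large independent set. Crucially, this cannot be obtained by exhibiting a large grid or wall as an induced subgraph, since such a subgraph has bounded clique number and unbounded treewidth and would, through the hereditary closure, contradict the $(\tw,\omega)$-boundedness just established. The independent set must instead be extracted from the genuine wheel structure. I would fix a tree decomposition $\mathcal T$, apply a balanced-separator/centroid argument to the host tree to find one bag $X_s$ through which many layers are forced to pass, and then select from each such layer a single vertex of its long hole, the selection arranged so that vertices taken from distinct layers are pairwise non-adjacent. Turning this into an antichain of unbounded size inside the one bag $X_s$ --- in spite of all the edges contributed by the clique-hubs --- is the crux, and is where a bramble-type certificate adapted to tree-independence number (connected, pairwise-touching pieces that admit no small independent transversal) would organise the argument.

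The step I expect to be the main obstacle is exactly this lower bound. The construction must simultaneously admit a decomposition of small independence width in the layered direction (which yields the $(\tw,\omega)$ upper bound) and forbid small independence width in \emph{every} decomposition. These demands pull in opposite directions, so the wheel has to be engineered so that any decomposition not aligned with the layers is compelled to cut transversally across the holes and thereby pile a large independent set into a common bag; making ``compelled'' quantitative and converting it into a clean, unbounded lower bound on $\ta$ is the real work.
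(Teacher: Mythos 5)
Your overall architecture is the right one --- the conjecture is to be refuted, the known direction is the Ramsey one, and the counterexample is a hereditary class built from a layered wheel that is $(\tw,\omega)$-bounded yet has unbounded tree-independence number --- but the step you yourself flag as ``the real work'' is precisely where your proposed route diverges from what actually works, and as written it has a genuine gap. First, you do not need a centroid or balanced-separator argument on the host tree to find the critical bag: the layers of the wheel are connected (each induces a cycle) and pairwise adjacent (\cref{l:layersCM}), so $L_1,\dots,L_{F(k)}$ form a clique minor, and the Helly property of subtrees (\cref{l:Kminor}) immediately yields a single bag $X_s$ meeting every layer. Second, and more importantly, the plan to ``select from each layer a single vertex\dots arranged so that vertices taken from distinct layers are pairwise non-adjacent'' is the wrong target: the inter-layer adjacencies via the sets $N^{\uparrow}$ are exactly what you cannot control vertex by vertex, and no bramble-type certificate is needed. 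The paper's key observation is that \emph{any} transversal $Y$ of the layers (one vertex per layer, no non-adjacency required) induces a chordal graph (\cref{l:chordal}: the topmost vertex of any induced subgraph of $Y$ is simplicial because its backward neighborhood is a clique by rule R5). Perfection of chordal graphs (\cref{th:ch}) then converts the clique bound $\omega(H[Y])\leq k$ into
$$\alpha(H[X_s]) \;\geq\; \alpha(H[Y]) \;\geq\; \frac{|Y|}{\omega(H[Y])} \;\geq\; \frac{F(k)}{k},$$
so the independent set falls out of a counting argument rather than an explicit selection.

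This also exposes the quantitative engine your sketch omits: the lower bound on $\ta$ is $F(k)/k$, so the construction only works when the cumulative function $F$ (equivalently, the binding function $g$) is \emph{super-linear}; the paper states this hypothesis explicitly in \cref{th:dallardMiSto} and records in the introduction that the linear case remains open. Your two demands that ``pull in opposite directions'' are reconciled exactly here: the treewidth upper bound (obtained in the paper not by exhibiting a decomposition but via fair/balanced separations and the Dvo\v{r}\'ak--Norin theorem, \cref{th:DvNo}) scales like $F(\omega+1)$, while the forced independent set scales like $F(\omega)/\omega$, and both can grow simultaneously precisely because $F$ is super-linear. Without the chordality-plus-perfection step and the super-linearity hypothesis, your lower bound does not close.
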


Our construction disproves this conjecture, even when the function
that bounds the treewidth is assumed to be a polynomial, see
\cref{th:dallardMiSto} below.

\medskip

Our construction also sheds light on certifying a large treewidth in a
graph $G$ by exhibiting some simpler substructure of $G$ of large
treewidth.  The well-known Grid Theorem by Robertson and
Seymour~\cite{DBLP:journals/jct/RobertsonS86} gives a neat certificate
when the substructure under consideration is a minor of $G$.  When the
substructure under consideration is an induced subgraph of $G$, the
situation is more complicated and is still the subject of much
research.  To understand this better, several questions and
conjectures (together with a survey) are proposed by Hajebi
in~\cite{hajebi:conj}.

\begin{conjecture}[Hajebi, see {\cite[Conjecture 1.14]{hajebi:conj}}]
  \label{conj:hajebiConj1}
  For every $t \geq 1$, every graph of large enough treewidth has an
  induced subgraph of treewidth $t$ which is either complete or
  $K_4$-free.
\end{conjecture}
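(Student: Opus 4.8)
The plan is to refute \cref{conj:hajebiConj1} rather than to prove it: I expect there to be graphs of unbounded treewidth in which \emph{every} induced subgraph that is complete or $K_4$-free is already ``simple'', in the sense of having small treewidth. Concretely, to contradict the conjecture it is enough to fix a single integer $t$ and to exhibit, for every $w$, a graph $G$ with $\tw(G) \geq w$ in which no complete or $K_4$-free induced subgraph has treewidth $t$. I would first separate the two obligations hidden in ``complete or $K_4$-free''. A complete induced subgraph of treewidth at least $t$ is a clique on at least $t+1$ vertices, so all complete induced subgraphs are harmless as soon as $\omega(G) \leq t$, since then each has treewidth at most $t-1$. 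A $K_4$-free induced subgraph of treewidth at least $t$ is ruled out as soon as \emph{every} $K_4$-free induced subgraph of $G$ has treewidth strictly less than $t$. Thus the whole refutation reduces to producing graphs of arbitrarily large treewidth that simultaneously have bounded clique number and in which every $K_4$-free induced subgraph has small treewidth.

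This is exactly the setting supplied by the layered-wheel variant announced in the introduction. Instantiating the construction with $c = 4$, and choosing its parameters so that the clique number stays bounded, I would obtain a family $(G_n)_n$ with $\tw(G_n) \to \infty$ such that $\omega(G_n) \leq \omega_0$ for all $n$ (so the graphs contain $K_4$ but no large clique) and every $K_4$-free induced subgraph of $G_n$ has treewidth at most some constant $b$. I would then set $t := \max(\omega_0, b) + 1$, a fixed integer independent of $n$. For this $t$, every complete induced subgraph of $G_n$ has treewidth at most $\omega_0 - 1 < t$, and every $K_4$-free induced subgraph has treewidth at most $b < t$; hence $G_n$ has no complete or $K_4$-free induced subgraph of treewidth $t$ (indeed none of treewidth $\geq t$), even though $\tw(G_n)$ exceeds any prescribed bound $w$. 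This contradicts \cref{conj:hajebiConj1} for the value $t$ and completes the refutation.

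The genuinely hard part is none of this bookkeeping but the construction itself, and in particular the verification of its two invariants against each other: that $\tw(G_n)$ grows without bound while $\omega(G_n)$ and the treewidth of every $K_4$-free induced subgraph both stay bounded by constants. The tension is real, because forbidding $K_4$ does \emph{not} in general destroy treewidth --- large grids are already $K_4$-free --- so the original layered wheel, being itself $K_4$-free of large treewidth, would satisfy the conjecture. The variant must therefore contain copies of $K_4$ essentially, arranging the layering so that the large treewidth is ``carried'' by these copies and collapses the moment they are broken. Proving the upper bound on the treewidth of an arbitrary $K_4$-free induced subgraph --- presumably by reading off, from the layered structure, an explicit tree decomposition of width bounded by a constant --- is where I expect essentially all of the difficulty to lie; the lower bound $\tw(G_n) \to \infty$ should follow more routinely by exhibiting a large bramble or grid-like structure inherent to the full construction.
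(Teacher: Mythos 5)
Your refutation strategy is correct and is essentially the paper's own: the paper proves (Theorem~\ref{th:hajebiConj}) that for the eventually-constant slow function $f(i)=\min\{i,c+1\}$ the layered wheel yields graphs of arbitrarily large treewidth with $\omega=c+1$ in which every $K_c$-free induced subgraph has treewidth at most $15(c+(\ell+1)(c-1)-2)$, and taking $c=4$ and $t$ above both that constant and the clique number refutes the conjecture exactly as in your bookkeeping. The only difference is in the part you deferred: the paper bounds the treewidth of $K_4$-free induced subgraphs not by an explicit tree decomposition but via balanced separations (Lemma~\ref{l:balancedS}) combined with the Dvo\v r\'ak--Norin theorem (Theorem~\ref{th:DvNo}).
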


Our construction (or more precisely a variant of it) disproves this
conjecture, even in a weaker form, where $K_4$ is replaced in the
statement by $K_c$ for any constant $c\geq 4$, see
\cref{th:hajebiConj} below.  Note that when $c\in \{1, 2\}$ the
statement is trivially false, and for $c=3$ it is already disproved
in~\cite{DBLP:journals/jgt/SintiariT21} with the construction of
$K_4$-free graphs of high treewidth with no even holes.  For every
integer $c$, a graph is \emph{$c$-degenerate} if each of its induced
subgraphs contains a vertex of degree at most~$c$.  A strengthening of
\cref{conj:hajebiConj1} is proposed in the same paper.

\begin{conjecture}[Hajebi, see {\cite[Conjecture 1.15]{hajebi:conj}}]
  \label{conj:hajebiConj2}
  For every integer $t \geq 1$, every graph of large enough treewidth
  has an induced subgraph of treewidth $t$ which is either complete,
  complete bipartite, or 2-degenerate.
\end{conjecture}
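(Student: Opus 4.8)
The plan is to \emph{refute} \cref{conj:hajebiConj2} rather than prove it, using the same construction that refutes \cref{conj:hajebiConj1}, now specialised to $c=4$. For a suitable choice of parameters (in particular taking the hole-length bound $\ell\ge 5$), I will use that the construction produces, for every $w$, a graph $G$ with $\tw(G)\ge w$ that enjoys three properties: (i) $\omega(G)\le C$ for an absolute constant $C$; (ii) $G$ has no induced $C_4$, since all its holes have length at least $\ell\ge 5$; and (iii) every $K_4$-free induced subgraph of $G$ has treewidth at most a constant $D$ independent of $w$. Here (iii) is exactly the structural statement that already underlies the refutation of \cref{conj:hajebiConj1} for $c=4$, while (i) and (ii) are immediate from the design of the construction.

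Granting (i)--(iii), fix an integer $t>\max(C,D)$ and let $G$ range over the family with $\tw(G)$ arbitrarily large. I claim that no induced subgraph of $G$ of treewidth $t$ is complete, complete bipartite, or $2$-degenerate, which contradicts \cref{conj:hajebiConj2}. A complete induced subgraph is a clique $K_j$ with $j\le\omega(G)\le C$, hence of treewidth at most $C-1<t$. A complete bipartite graph of treewidth at least $2$ contains an induced $C_4$, so by (ii) no complete bipartite induced subgraph of $G$ has treewidth at least $2$, and in particular none has treewidth $t$. Finally, every $2$-degenerate graph is $K_4$-free, because $K_4$ has minimum degree $3$ and so cannot occur as a subgraph of a graph all of whose subgraphs contain a vertex of degree at most $2$; hence by (iii) every $2$-degenerate induced subgraph of $G$ has treewidth at most $D<t$. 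The three cases together establish the claim.

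The whole weight of the argument sits in property (iii): proving that $K_4$-free induced subgraphs of the construction have treewidth bounded by a constant that does not grow with $\tw(G)$. This is the genuinely hard, structural part, and it is precisely what the layered-wheel analysis behind \cref{conj:hajebiConj1} (with $c=4$) has to deliver; once it is in hand, the reduction above is essentially free, resting only on the elementary facts that a bounded clique number rules out large complete subgraphs, that $C_4$-freeness collapses every complete bipartite induced subgraph to a star, and that $2$-degeneracy implies $K_4$-freeness. Note also that $c=4$ is essential here: the $c=3$ construction of Sintiari and Trotignon is $K_4$-free of large treewidth and hence does not constrain $2$-degenerate induced subgraphs, so only the $c=4$ variant forces the third option to fail. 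The one subtlety to watch is the order of quantifiers: the constants $C$ and $D$ must be fixed first, then $t$ chosen larger than both, and only afterwards is $\tw(G)$ sent to infinity, so that a single value of $t$ witnesses the failure of \cref{conj:hajebiConj2} across graphs of unbounded treewidth.
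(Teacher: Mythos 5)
Your refutation is correct and follows essentially the same route as the paper: its \cref{th:hajebiConj} (instantiated with $c=4$ and $\ell\geq 5$) is exactly your properties (i)--(iii), with the same three reductions --- bounded clique number kills the complete case, the absence of holes of length $4$ kills the complete bipartite case via induced $C_4$'s, and $2$-degenerate $\Rightarrow K_4$-free kills the third case --- and the hard part (iii) is likewise delegated to the layered-wheel treewidth bound (\cref{th:tw} with $F(4)=4$, $F(5)=+\infty$).
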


Our construction also disproves \cref{conj:hajebiConj2}, see
\cref{th:hajebiConj} below (even with ``2-degenerate'' replaced by
higher degeneracy).

\subsection*{Unanswered questions}

We would like to point out several questions that our construction
does not answer.  Our construction is easily seen to produce graphs whose treewidth
is logarithmic in the number of vertices.  This implies (see for
instance~\cite[Theorem 9.2]{abrishamiEtAl:twIII}) that the Max Weight
Independent Set problem is polynomial-time solvable for our
construction.  Hence, we believe the following question might still
have an affirmative answer, and that logarithmic treewidth might be a
key ingredient of an algorithm.

\begin{question}[Dallard, Milani\v c and \v Storgel, see~\cite{DBLP:journals/jctb/DallardMS24}]
Is the Max Weight Independent Set problem solvable in polynomial
time in every $(\tw, \omega)$-bounded graph class?
\end{question}

In our counter-example to \cref{conj:dallardMiSto}, we need that the class
is $(\tw, \omega)$-bounded by a super-linear function.  Hence, our
construction does not seem to help answering the following question.

\begin{question}
  Does every hereditary class of graph $\cal C$ such that for some
  constant $c$, every graph $G \in \cal C$ satifisfies
  $\tw(G) \leq c\,\omega(G)$, has bounded tree-independence number?
\end{question}

We leave as an open question the existence of an even-hole-free
variant of our construction.  If it exists, it might disprove the
following conjecture.

\begin{conjecture}[Hajebi, see~\cite{hajebi:conj}]
  For every $t \geq 1$, every even-hole-free graph of large enough
  treewidth has an induced subgraph of treewidth $t$ which is either
  complete or $K_4$-free.
\end{conjecture}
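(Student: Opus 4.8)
The plan is shaped by the fact that this is stated as an open conjecture: the machinery of the present paper points toward a \emph{disproof} rather than a proof, so I would first test the conjecture by attempting the construction the authors flag as missing — an even-hole-free variant of the layered wheel — and treat the positive direction only as a fallback that tells us what a proof would have to deliver.

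The disproof template is exactly the one already used for \cref{th:hajebiConj}. Fix a large target $t$ and suppose we had a hereditary family of even-hole-free graphs $(G_n)$ with: (i) $\omega(G_n)$ bounded by a constant $c_0 \le t$; (ii) $\tw(G_n) \to \infty$; and (iii) every $K_4$-free induced subgraph has bounded treewidth. Then in any $G_n$ with $\tw(G_n) \gg t$, pick an induced subgraph $H$ of treewidth exactly $t$; such an $H$ exists because deleting one vertex drops the treewidth by at most one, so deleting vertices one at a time from treewidth at least $t$ down to the empty graph must pass through treewidth exactly $t$. This $H$ cannot be complete, since a complete graph of treewidth $t$ is $K_{t+1}$ while $\omega(G_n)\le c_0\le t$; and it cannot be $K_4$-free by (iii). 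Hence $H$ is neither complete nor $K_4$-free, contradicting the conjecture. The entire burden is to realise (i)--(iii) simultaneously inside an even-hole-free graph, and the construction I would attempt mirrors the layered build-up of the main body: attach each new vertex to carefully chosen intervals of the previous layer so that a grid- or expander-like trace forces $\tw\to\infty$ while the clique number stays bounded, and so that deleting the dense ``wheel centres'' leaves only small-treewidth pieces, yielding (iii).

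The main obstacle, by far, is even-hole-freeness. The neighbourhood patterns that manufacture large treewidth in a layered wheel naturally create $C_4$'s and longer even holes between consecutive layers, and suppressing all even holes usually forces either $\omega$ or $\tw$ to collapse. I would try to control holes by prescribing the parity with which each new vertex sees its lower neighbours, pushing the same bookkeeping that already lets the authors forbid every hole of length less than a prescribed $\ell$ — the hope being that a parity-controlled attachment can be strengthened to forbid even holes of \emph{all} lengths rather than merely short ones. I expect this to be the step that either succeeds and disproves the conjecture, or fails in an informative way.

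If no such construction exists, the same analysis isolates what a proof must provide. The complete case is free: if $\omega(G)\ge t+1$ then $K_{t+1}$ works. So one may assume $\omega(G)\le t$, and by the vertex-deletion trimming above it suffices to extract a $K_4$-free induced subgraph of treewidth at least $t$. The real theorem would then be structural — an even-hole-free graph of bounded clique number and sufficiently large treewidth must contain a $K_4$-free induced subgraph of large treewidth — presumably obtained by a decomposition theorem for even-hole-free graphs that peels off the clique-cutset, $2$-join and star-cutset structure and exposes a sparse core. I expect this extraction, not the trimming or the clique case, to be the genuinely hard part, and it is precisely the part the present construction does not settle.
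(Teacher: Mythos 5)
The statement you were asked to prove is not something the paper proves: it is Hajebi's conjecture, quoted verbatim in the paper's ``Unanswered questions'' subsection, and the authors explicitly leave open whether an even-hole-free variant of their construction exists, noting only that such a variant \emph{might} disprove the conjecture. So there is no proof in the paper to compare against, and your proposal --- quite correctly --- does not contain one either. Your disproof template is the right one: it is exactly the argument behind \cref{th:hajebiConj} in the setting without the even-hole-free restriction (bounded clique number rules out complete induced subgraphs of treewidth $t$ once $t\geq \omega$, and your property (iii) rules out $K_4$-free ones once $t$ exceeds the bound). One phrasing quibble: ``pick an $H$ of treewidth exactly $t$ and show it is neither'' does not by itself refute an existential claim; what your two sub-arguments actually establish is that \emph{no} induced subgraph of treewidth $t$ can be complete or $K_4$-free, which is what you need, so the substance is recoverable even though the framing is off.

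The genuine gap is the one you yourself flag: everything hinges on constructing an even-hole-free hereditary family with bounded clique number, unbounded treewidth, and bounded treewidth of $K_4$-free induced subgraphs, and neither you nor the paper produces it. The paper's rules \ref{bS} and \ref{bB} control only the \emph{minimum} hole length $\ell$; they impose no parity condition, and indeed the construction freely creates even holes of length $\geq \ell$. The known even-hole-free layered wheels of Sintiari and Trotignon are of no help here, since they are $K_4$-free: a $K_4$-free even-hole-free graph of large treewidth \emph{witnesses} the conjecture rather than refuting it, which is exactly why one needs $\omega\geq 5$ together with collapse of treewidth upon excluding $K_4$. Your parity-controlled attachment idea is a reasonable line of attack, but it is precisely the unexecuted step. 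As it stands, the proposal is an accurate assessment of an open problem and a correct reduction of the disproof to a missing construction, not a proof or disproof of the statement.
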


In our counter-example to \cref{conj:hajebiConj1}, we need a graph $G$
such that $\omega(G) = c+1$ to guarantee that $K_c$-free graphs in the
class have bounded treewidth. Hence, we do not know the answer to the
following question (quite suprisingly, \cref{th:allFunctions} does not
seem to help).

\begin{question}
  \label{q:hw}
  Does there exist a function $f: \mathbb N\setminus\{0\} \times \mathbb N\setminus\{0\}
  \rightarrow \mathbb N$ such that for all integers $k, t \geq
  1$, all graphs $G$ such that $\tw(G) \geq f(k, t)$ and $\omega(G) =
  k$ contain an induced subgraph $H$ such that $\tw(H) \geq t$ and
  $\omega(H) = k-1$? 
\end{question}

It might be of interest to study what induced subgraphs are
contained in our construction.  The original layered wheels
from~\cite{DBLP:journals/jgt/SintiariT21} suggest that maybe the
so-called 3-path-configurations (see the definition
in~\cite{DBLP:journals/jgt/SintiariT21})  are not contained in our
construction, or in some variant of our construction.

It is easy to check that our construction contains as an induced
subgraph an already known construction of Corneil and Rotics, designed
to find graphs of large cliquewidth (not worth defining here) and
small treewidth, see~\cite{DBLP:journals/siamcomp/CorneilR05}.  We
therefore wonder whether our construction improves bounds about
differents notions of widths of graphs. 

\subsection*{Tools to bound the treewidth}

To prove our results, we need to bound the treewidth and the
tree-independence number of graphs produced by our construction. For
that we rely on several known concepts and theorems.

To bound the treewidth and the tree-independence number from below, we
rely on \emph{clique minors}.  For a graph $G$, two disjoint sets
$Y\subseteq V(G)$ and $Z\subseteq V(G)$ are \emph{adjacent} if some
edge of $G$ has an end in $Y$ and an end in $Z$.  A \emph{clique
  minor} of a graph $G$ is a family of disjoint, connected, and
pairwise adjacent subsets $L_1$, \dots, $L_t$ of $V(G)$.  The
following is a well-known consequence of the Helly property of
subtrees of trees.

\begin{lemma}
  \label{l:Kminor}
  If $\mathcal T = (T, (X_s)_{s\in V (T )})$ is a tree decomposition
  of some graph $G$ and $(L_1, \dots, L_t)$ is a clique minor of $G$,
  then there exists $s\in V(T)$ such that $X_s$ contains at least one
  vertex of each $L_i$, $i\in \{1, \dots, t\}$. In particular,
  $\tw(G) \geq t-1$ and the independent width of $\mathcal T$ is at
  least $\alpha(G[X_s])$.
\end{lemma}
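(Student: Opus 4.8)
The plan is to prove \cref{l:Kminor} by combining the defining ``path-intersection'' property of tree decompositions with the Helly property of subtrees of a tree. Recall that for each vertex $u \in V(G)$, the set $T_u = \{s \in V(T) \mid u \in X_s\}$ induces a connected subtree of $T$, and that whenever $uv \in E(G)$ there is a bag containing both $u$ and $v$, so $T_u \cap T_v \neq \emptyset$.

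First I would, for each $i \in \{1,\dots,t\}$, define the subtree $T_i = \bigcup_{u \in L_i} T_u$ of $T$, i.e.\ the set of nodes $s$ whose bag meets $L_i$. Since each $L_i$ is connected in $G$ and each $T_u$ is a subtree, I would verify that $T_i$ is itself connected: following an edge $uv$ inside $G[L_i]$ gives $T_u \cap T_v \neq \emptyset$, so the subtrees $T_u$ for $u \in L_i$ glue along a connected subgraph of $G$ into a single subtree of $T$. Next, because $L_i$ and $L_j$ are adjacent for all $i \neq j$, there is an edge with one end $u \in L_i$ and one end $v \in L_j$, whence some bag contains both $u$ and $v$ and thus $T_i \cap T_j \neq \emptyset$. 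So $T_1,\dots,T_t$ is a family of pairwise-intersecting subtrees of the tree $T$.

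The crux is then a direct appeal to the Helly property of subtrees of a tree: any finite family of pairwise-intersecting subtrees has a common node. Applying it to $T_1,\dots,T_t$ yields a node $s \in V(T)$ lying in every $T_i$, which by definition means $X_s$ contains at least one vertex of each $L_i$. This is exactly the claimed node. The two quantitative consequences then follow immediately: the vertices of $X_s$ chosen from the $L_i$ are pairwise distinct (the $L_i$ are disjoint), so $|X_s| \geq t$ and hence $\tw(G) \geq |X_s| - 1 \geq t-1$; and for the tree-independence number, $G[X_s]$ contains $G[\{v_1,\dots,v_t\}]$ for representatives $v_i \in L_i \cap X_s$, so $\alpha(G[X_s]) \geq \alpha(G[X_s])$ trivially bounds the independent width of $\mathcal{T}$ from below by $\alpha(G[X_s])$.

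The only real obstacle is the Helly property itself, but since the paper explicitly invokes it as the justification (``a well-known consequence of the Helly property of subtrees of trees''), I would treat it as a cited fact rather than reprove it; if a self-contained argument were wanted, one proves it by induction on $t$, picking two of the subtrees, replacing them by their intersection (again a subtree), and checking this smaller family is still pairwise intersecting. The remaining steps are routine bookkeeping, so the statement follows.
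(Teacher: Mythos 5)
Your proof is correct and follows exactly the route the paper intends: the paper gives no written proof of \cref{l:Kminor}, stating only that it is ``a well-known consequence of the Helly property of subtrees of trees,'' and your argument (connected $L_i$ yields a subtree $T_i$ of $T$, pairwise adjacency yields pairwise intersection, Helly yields a common node, and the two quantitative consequences follow from disjointness of the $L_i$ and the definition of independent width) is precisely the standard elaboration of that remark. No gaps.
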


To bound the treewidth from above, we rely on \emph{balanced
  separations}. A \emph{separation} of a graph $G$ is a pair $(A, B)$
of subsets of $V(G)$ such that $A \cup B = V(G)$ and no edge of $G$
has one end in $A \setminus B$ and the other in $B \setminus A$.  The
\emph{order} of the separation is $|A \cap B|$.  It is \emph{balanced}
if $|A \setminus B| \leq 2n / 3$ and $|B \setminus A| \leq 2n / 3$
where $n= |V(G)|$.  The \emph{separation number} $\sn(G)$ of $G$ is
the smallest integer $s$ such that every subgraph (or equivalently
induced subgraph) of $G$ has a balanced separation of order $s$.

\begin{theorem}[Dvor{\'{a}}k and Norin,
  see~\cite{DBLP:journals/jct/DvorakN19}]
  \label{th:DvNo}
  The treewidth of any graph $G$ is at most $15\sn(G)$. 
\end{theorem}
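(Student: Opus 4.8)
The plan is to build an explicit tree decomposition of $G$ of width at most $15\,\sn(G)$ by recursively cutting $G$ along small balanced separations while amortizing how much of the current ``interface'' we are forced to remember. Write $s=\sn(G)$. The construction processes pairs $(H,W)$, where $H$ is an induced subgraph of $G$ and $W\subseteq V(H)$ is a distinguished \emph{boundary} that must be placed in the root bag of the decomposition we output for $H$; the top call is $(G,\emptyset)$. At each step, if $|V(H)|$ is small we output a single bag $V(H)$; otherwise we choose a separation $(A,B)$ of $H$ with both sides nonempty and $|A\cap B|\le O(s)$, we create a root bag $W\cup(A\cap B)$, we recurse on $(H[A],\,(W\cap A)\cup(A\cap B))$ and on $(H[B],\,(W\cap B)\cup(A\cap B))$, and we attach the two returned decompositions below the root. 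Since the two children share exactly the separator $A\cap B$, which sits in the root bag, the usual check of the three tree-decomposition axioms shows the glued object is a valid tree decomposition; termination is routine because each recursive subgraph is a \emph{proper} induced subgraph of $H$.

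The entire point is to choose each separation so that the boundary cannot grow. If each chosen separation is \emph{balanced with respect to $W$}, meaning $|W\cap(A\setminus B)|\le\tfrac23|W|$ and $|W\cap(B\setminus A)|\le\tfrac23|W|$, then the boundary handed to each child has size at most $\tfrac23|W|+|A\cap B|$. With separators of order at most $c\,s$ this is the recurrence $|W'|\le\tfrac23|W|+c\,s$, whose fixed point is $3c\,s$; hence a one-line induction keeps $|W|\le 3c\,s$ throughout, and every bag $W\cup(A\cap B)$ then has size at most $3c\,s+c\,s=4c\,s$. Thus the whole bound reduces to producing, for each pair $(H,W)$, a single separation of $H$ that is balanced with respect to $W$ and has order bounded by a constant times $s$.

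The crux, and the only place where the hypothesis $\sn(G)=s$ enters, is exactly this last point, and I expect it to be the main obstacle. It does \emph{not} follow for free from the definition of $\sn$: the separation number only guarantees a separation balanced with respect to the \emph{vertex set} of a subgraph, and a vertex-balanced separation of $H$ can leave almost all of a small boundary $W$ on one side. There is also no cheap reduction from balancing uniform weights to balancing an arbitrary subset $W$, since the natural ``blow up each $w\in W$ into many copies'' trick leaves the class of subgraphs of $G$. Bridging this gap from uniform balance on subgraphs to balance of a prescribed set is the technical heart of the Dvořák--Norin argument.

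I would attack it by extracting a $W$-balanced separation \emph{iteratively}: repeatedly invoke the order-$\le s$ vertex-balanced guarantee on the subgraph induced by the currently heaviest part of $W$, accumulating the separators produced until the two sides each carry at most $\tfrac23|W|$ of the boundary, and then bound the total number of accumulated separator vertices by a geometric series so that the resulting separation of $H$ still has order $O(s)$. Carefully tracking the losses incurred in (i) turning vertex-balanced separations into a $W$-balanced one, (ii) the $3c\,s$ fixed point of the boundary recurrence, and (iii) the passage between the balance ratios $\tfrac12$ and $\tfrac23$ used at the various stages, is precisely what multiplies the constant factors together; optimizing these choices is what yields the clean final bound $\tw(G)\le 15\,\sn(G)$.
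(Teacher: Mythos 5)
First, a point of reference: the paper does not prove this theorem at all — it is imported as a black box from Dvo\v{r}\'ak and Norin~\cite{DBLP:journals/jct/DvorakN19} — so your attempt can only be judged on its own merits, not against a proof in the text. Your outer skeleton is the standard one and is sound as far as it goes: recursively cut along separations balanced with respect to a boundary set $W$, use the recurrence $|W'|\le\tfrac23|W|+cs$ with fixed point $3cs$ to keep all bags of size at most $4cs$, and thereby reduce everything to producing, for each $(H,W)$, a single separation of $H$ of order $O(s)$ that is balanced \emph{with respect to $W$}. You also correctly identify that this last step is the crux, and that it does not follow formally from the definition of $\sn$.

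The genuine gap is that your proposed attack on that crux fails. If ``the subgraph induced by the currently heaviest part of $W$'' means $H[W']$ for some $W'\subseteq W$, then a separation of $H[W']$ is simply not a separation of $H$: the two sides may be joined by paths through $V(H)\setminus W'$, so there is nothing to accumulate. If instead you mean iterating on the side of the current partial separation that carries the most $W$-weight, then each invocation of the hypothesis costs up to $s$ fresh separator vertices while guaranteeing \emph{no} progress on the $W$-weights: the hypothesis is an oracle that hands you \emph{some} vertex-balanced separation of the part, and that separation may leave every $W$-vertex of the part on one side. The only quantity that shrinks geometrically is the vertex count of the parts, so the iteration is only forced to terminate after roughly $\log_{3/2}\bigl(n/|W|\bigr)$ rounds, by which time the accumulated separator has order $\Theta(s\log n)$, not $O(s)$. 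There is no geometric series on separator orders to sum — you are charged (up to) $s$ at every round. In other words, your scheme reproves the folklore bound $\tw(G)=O(\sn(G)\log n)$; obtaining the \emph{linear} dependence is precisely the content of the Dvo\v{r}\'ak--Norin theorem, and it requires an idea beyond iterating the vertex-balanced guarantee on the current parts. So the proposal is an honest reduction to the hard step together with a sketch of that step that does not work.
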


We also need the following classical results. A graph is
\emph{chordal} if it contains no hole.  A classical characterization
due to Rose~\cite{rose:triangulated} tells that a graph $G$ is chordal
 if and only if all induced subgraphs $H$ of $G$ contain a
simplicial vertex (in~$H$), where a vertex is \emph{simplicial} if its
neighborhood is a clique (for our purpose, we only need this
characterization of chordal graphs).  The following is usually refered
to as the perfection of chordal graphs. 

\begin{theorem}[Dirac, see \cite{dirac:chordal}]
  \label{th:ch}
  If $G$ is a chordal graph, then $\chi(G) = \omega(G)$. 
\end{theorem}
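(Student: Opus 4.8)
The plan is to prove the two inequalities $\omega(G) \leq \chi(G)$ and $\chi(G) \leq \omega(G)$ separately. The first is immediate and requires no hypothesis on $G$: a clique of size $\omega(G)$ forces all its vertices to receive distinct colors in any proper coloring, so every proper coloring uses at least $\omega(G)$ colors, giving $\chi(G) \geq \omega(G)$. The substance of the theorem is therefore the reverse inequality $\chi(G) \leq \omega(G)$, which I would establish by exhibiting a proper coloring of $G$ using only $\omega(G)$ colors.

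For this I would induct on $|V(G)|$, relying on Rose's characterization of chordal graphs quoted in the excerpt, namely that every induced subgraph of a chordal graph contains a simplicial vertex. The base case (say $G$ empty, or a single vertex) is trivial. For the inductive step, let $v$ be a simplicial vertex of $G$, so that $N(v)$ is a clique. The graph $G - v$ is an induced subgraph of $G$, hence chordal, and clearly $\omega(G - v) \leq \omega(G)$; by the induction hypothesis $G - v$ admits a proper coloring $c$ using at most $\omega(G)$ colors. I then need only assign a color to $v$ that differs from the colors of all its neighbors.

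The key observation is the bound on the degree of a simplicial vertex. Since $N(v)$ is a clique and $v$ is adjacent to every vertex of $N(v)$, the set $N[v] = N(v) \cup \{v\}$ is itself a clique, so $|N(v)| + 1 = |N[v]| \leq \omega(G)$, giving $|N(v)| \leq \omega(G) - 1$. Consequently the neighbors of $v$ use at most $\omega(G) - 1$ of the $\omega(G)$ available colors under $c$, so at least one color remains free; assigning it to $v$ extends $c$ to a proper coloring of $G$ with $\omega(G)$ colors. This yields $\chi(G) \leq \omega(G)$ and completes the induction.

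I do not expect any genuine obstacle here: the argument is the classical greedy coloring along a simplicial elimination ordering, and every ingredient it needs (heredity of chordality, existence of a simplicial vertex, the clique structure of $N[v]$) is either elementary or supplied by Rose's characterization stated above. The only point demanding a little care is confirming that the neighborhood of a simplicial vertex has size at most $\omega(G) - 1$ rather than $\omega(G)$, which is exactly what makes a free color available; this follows from $N[v]$ being a clique of size $\lvert N(v)\rvert + 1$.
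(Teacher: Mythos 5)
Your proof is correct: it is the standard greedy colouring along a simplicial elimination ordering, and every step (heredity of chordality, the existence of a simplicial vertex via Rose's characterization, and the bound $|N(v)| \le \omega(G)-1$ coming from $N[v]$ being a clique) is sound. The paper itself states this result only as a cited classical theorem of Dirac and gives no proof, so there is nothing to compare against; your argument is exactly the textbook one and fills that gap correctly.
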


\subsection*{Outline of the paper}

We define what we call the $(f, \ell)$-layered wheels in
\cref{sec:def}.  We study some of their structural properties in
\cref{sec:struct}.  The $(f, \ell)$-layered wheels are infinite graphs
and their finite induced subgraphs are studied in \cref{sec:finite}.
We provide the answers to questions and counter-examples to
conjectures in \cref{sec:app}.

\section{Definition of layered wheels}
\label{sec:def}

A function $f:\mathbb N\setminus\{0\} \rightarrow \mathbb N\setminus\{0\}$ is \emph{slow} if
$f(1)=1$, $f(2)=2$, $f(3)=3$ and for every $i\in \mathbb N\setminus\{0\}$,
$f(i) \leq f(i+1) \leq f(i) +1$.  Observe that a slow function is
non-decreasing.  Hence, every slow function is either ultimately constant
or tends to $+\infty$.

For every slow function $f$ and every integer $\ell \geq 4$, we define
an oriented graph $G$ called the \emph{$(f, \ell)$-layered wheel}.
The vertex-set of $G$ is countably infinite.  The set of arcs of $G$
is denoted by $A(G)$.  Note that all the theorems in this paper are
about the underlying graph of $G$.  The orientations of the arcs are
only used to facilitate the description of several subsets of
$V(G)$. Before defining $G$ precisely, we list five rules giving some
properties and terminology that help stating the formal description.

\begin{figure}[t]
  \begin{center}
    \includegraphics[width=11cm]{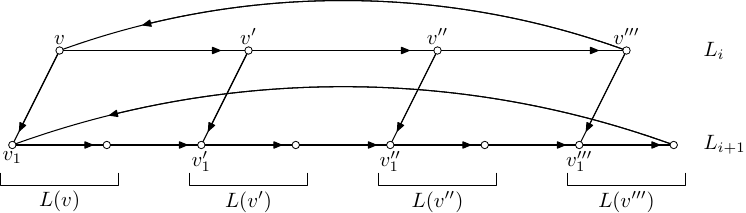}
  \end{center}
  \caption{Rules \ref{a:hole}--\ref{aN}\label{f:A}}
\end{figure}

\begin{enumerate}%[{A}1]
\item\label{a:f} $V(G)$ is partitioned into sets
  $L_i$, ${i \in \mathbb N\setminus\{0\}}$,  called the \emph{layers} of
  $G$.

\item\label{a:hole} Each layer $L_i$, $i \in \mathbb N\setminus\{0\}$, induces a
  directed cycle of length at least $\ell$.

\item\label{a:orient} If $u$ and $v$ are adjacent vertices in different layers, say
  $v\in L_i$, $u\in L_j$ and $i<j$, then the arc linking them is
  oriented from $v$ to $u$.
  
\item\label{aN} For every $i\in \mathbb N\setminus\{0\}$ and every vertex
  $v\in L_i$, there exists a positive integer $n_v$ and a directed path
  $L(v) = v_1 \dots v_{n_v}$ such that $V(L(v)) \subseteq L_{i+1}$ and
  $L(v)$ contains all the neighbors of $v$ in $L_{i+1}$.  The
  vertex $v_1$ is adjacent to $v$. Moreover the paths $L(v)$,
  $v\in L_i$, are vertex-disjoint, $L_{i+1} = \bigcup_{v\in L_i} V(L(v))$
  and if $vv' \in A(G)$, then $v_{n_v}v'_1 \in A(G)$.

  It follows that a vertex $u \in L_{i+1}$ has at most one neighbor in
  $L_i$.  If such a neighbor $v$ exists, we say that $v$ is the
  \emph{parent} of $u$ and $u$ is a \emph{child} of~$v$. Observe that
  every vertex in $G$ has at least one child and at most one parent.

  Informally, the directed paths $L(v)$, $v\in L_i$, vertex-wise
  partition $L_{i+1}$, and parents in $L_i$ and their children in
  $L_{i+1}$ appear in the same cyclic order, see \cref{f:A}.

\item\label{a:l} If $v\in L_{i}$, then $v$ has at most $f(i)-1$
  neighbors in $\bigcup_{1\leq j <i} L_j$.  Moreover, these neighbors
  induce a (possibly empty) clique that contains at most one vertex in
  each layer.  We denote this clique by $N^{\uparrow}(v)$ and we set
  $N^{\uparrow}[v] = \{v\} \cup N^{\uparrow}(v)$.
\end{enumerate}

Now we define the $(f, \ell)$-layered wheel $G$.  The first layer
$L_1$ is a directed cycle of length $\ell$.  We suppose inductively
that for some integer $i\geq 1$, the graph induced by the $i$ first
layers (so $G[L_1 \cup \dots \cup L_{i}]$) is defined, and we explain
how to add the next layer $L_{i+1}$.  We assume that rules~\ref{a:f},
\dots, \ref{a:l} hold for layers $L_1$, \dots, $L_i$.  Since $f$ is
slow, $f(i) \in \{f(i+1)-1, f(i+1)\}$.  Hence, by rule~\ref{a:l}, for
every $v\in L_i$, $|N^\uparrow(v)| \leq f(i+1)-1$.

To fully describe $G[L_1 \cup \dots \cup L_{i+1}]$, it is sufficient
to define for each $v\in L_i$ the integer $n_v$, and for each vertex
$v_j\in L(v)$, the set $N^\uparrow(v_j)$; then $L_{i+1}$ can be
described as the vertex-set of the directed cycle formed by the
consecutive directed paths $L(v)$, so the neighborhood of every vertex
from $L_{i+1}$ is fully described.  So, let $v$ be a vertex in
$L_i$. There are two cases (see \cref{f:B1} and \cref{f:B2}):
  
\begin{figure}[p]
  \begin{center}
    \includegraphics[width=14cm]{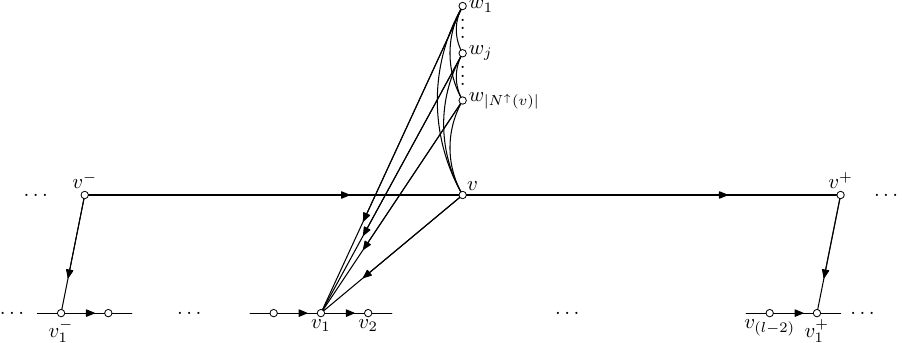}
  \end{center}
  \caption{Rule \ref{bS}\label{f:B1}}
\end{figure}

  \begin{figure}[p]
    \begin{center}
      \includegraphics[width=14cm]{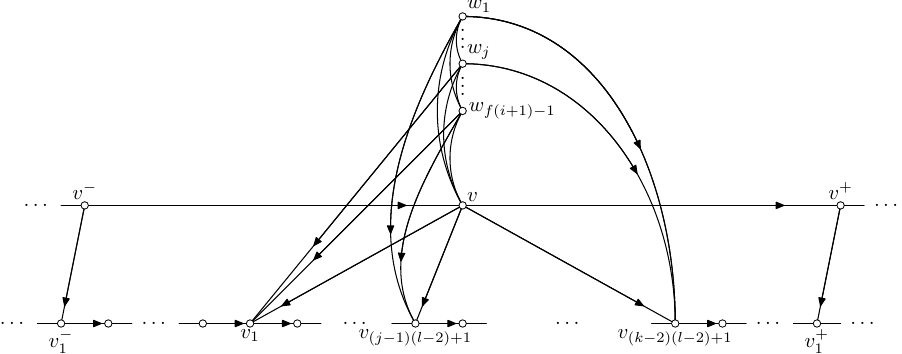}
    \end{center}
    \caption{Rule \ref{bB}\label{f:B2}}
  \end{figure}

\begin{enumerate}%[{B}1]
\setcounter{enumi}{5}
\item\label{bS} If $|N^\uparrow(v)| < f(i+1)-1$, then we set $n_v= \ell-2$, so 
  $L(v) = v_1 \dots v_{\ell-2}$.   Moreover, we set:

  \begin{itemize}
  \item $N^\uparrow (v_1) = N^\uparrow[v]$
  \item
    $N^\uparrow (v_2) = \cdots = N^\uparrow (v_{\ell - 2}) =
    \emptyset$.
  \end{itemize}

\item\label{bB} If $|N^\uparrow(v)| = f(i+1)-1$, then by
  rule~\ref{a:l}, we have
  $N^\uparrow(v) = \{w_1, \dots, w_{f(i+1)-1}\}$, where for all
  $j\in \{1, \dots, f(i+1)-1\}$, $w_j\in L_{i_j}$, and
  $1\leq i_1 < i_2 < \dots < i_{f(i+1)-1} < i$.  We set
  $n_v = (f(i+1)-1) (\ell -2)$, so
  $L(v) = v_1 \dots v_{(f(i+1)-1)(\ell -2)}$. Moreover, we set:

  \begin{itemize}
  \item For all $j\in \{1, \dots, f(i+1)-1\}$,
    $$N^\uparrow(v_{(j-1)(\ell - 2) +1}) = N^\uparrow[v] \setminus
    \{w_j\}.$$
  \item For all $j\in \{1, \dots, f(i+1)-1\}$ and
    $j' \in \{2, \dots, \ell-2\}$,
    $$N^\uparrow(v_{(j-1)(\ell - 2) +j'}) = \emptyset.$$ 
  \end{itemize}

\end{enumerate}

The description of $G[L_1 \cup \dots \cup L_{i+1}]$ is now completed
(see \cref{f:ex} for an example).  Note that the new layer $L_{i+1}$
satisfies rules~\ref{a:f}--\ref{a:l}, so the inductive process can go on, and
$G$ is inductively defined.  We now give several informal statements
that will hopefully help the reader.

\begin{figure}[h]
  \begin{center} \includegraphics[width=13cm]{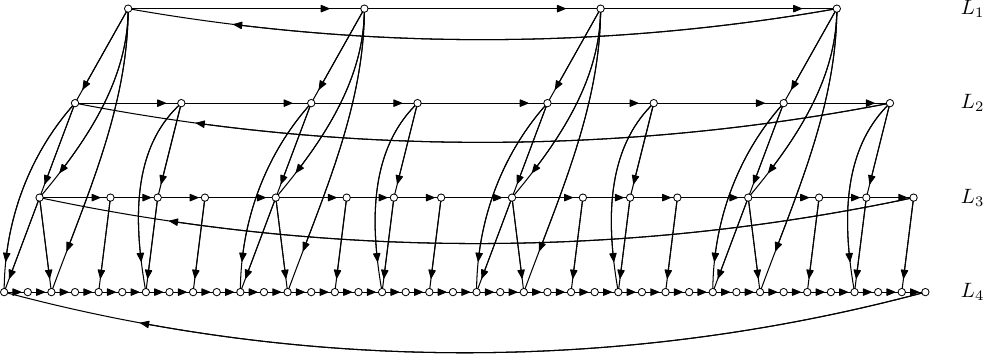}
  \end{center}
  \caption{Layers $L_1$ to $L_4$ of the $(f, 4)$-layered wheel when
    $f(4) = 3$ \label{f:ex}}
\end{figure}

\medskip

The $(f, \ell)$-layered wheel is infinite, but we are only interested
in its finite induced subgraphs, that form a hereditary class of
graphs.  Recall that the orientations of the arcs are here only to
help describing some sets of vertices later in the proofs. The
statements of the theorems are only about the underlying undirected
graph of $G$.

The layers of $G$ should be thought of as the sets of a clique minor.  More
precisely, it will be shown in \cref{l:layersCM} that for every pair
of layers $L_i, L_j$ there exists an edge with one end in $L_i$ and
the other in $L_j$.  It follows that for all integers $t\geq 1$, the
layers $L_1, \dots, L_t$ form a clique minor of
$G[L_1 \cup \dots \cup L_t]$, that has therefore treewidth at
least~$t-1$ by \cref{l:Kminor}.

The integer $\ell$ should be thought of as the length of a smallest
hole in $G$.  It controls the number of new vertices with no parent
that are introduced in each new layer.  The larger is $\ell$, the more of
them are needed to prevent creating short holes.

For every integer $i\geq 1$, the integer $f(i+1)$ should be thought of
as the size of the cliques that are allowed to be introduced when the
layer $L_{i+1}$ is built.  When a vertex $v\in L_i$ is such that
$|N^\uparrow(v)| < f(i+1)-1$, its children (all of which are in
$L_{i+1}$) can be made complete to $N^\uparrow[v]$ whithout creating a
clique of size larger than $f(i+1)$, and this explains the rule
\ref{bS}. When $|N^\uparrow(v)| = f(i+1)-1$, the children of $v$
cannot be made complete to $N^\uparrow[v]$ whithout creating a clique
of size larger than $f(i+1)$.  This explains why in rule \ref{bB}, in
the neighborhood of a child of $v$, we have to exclude one vertex from
$N^\uparrow[v]$.  Still, we want to give a chance of later
augmentation to as many cliques as possible, and this is why we create
$f(i+1)-1$ children of $v$ that cover all possible ways of extending a
clique of size $f(i+1)-1$ that contains~$v$.

The function $f$ should be thought of as the speed at which larger
cliques are introduced in the construction. We keep the flexibility of
tuning $f$ for different purposes. In particular it is convenient that
$f$ can be eventually constant.  Then the layered wheel will have
bounded clique size, and yet unbounded treewidth, see
\cref{th:hajebiConj}.  On the other hand, if $f$ tends to $+\infty$,
then the layered wheel provides a class of graphs whose treewidth is
bounded by a function of $\omega$ that is closely related to $f$, see
\cref{th:allFunctions} and \cref{th:dallardMiSto}.

\section{Structure of layered wheels}
\label{sec:struct}

Throughout this section, $f$ is a slow function, $\ell\geq 4$ is an
integer and $G$ is the $(f, \ell)$-layered wheel.

\begin{lemma}
  \label{l:lengthHoles}
  All holes of $G$ have length at least $\ell$. 
\end{lemma}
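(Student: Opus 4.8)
The plan is to analyze the structure of an arbitrary hole $H$ in $G$ and show it must contain a full directed cycle of one of the layers, which by rule~\ref{a:hole} has length at least $\ell$. The key observation is the downward-tree structure imposed by rules~\ref{aN} and~\ref{a:l}: every vertex has \emph{at most one} parent, and the upward-neighborhood $N^\uparrow(u)$ of any vertex $u$ is a clique. I would first fix $H$ and let $L_i$ be the \emph{top} (lowest-indexed) layer that meets $V(H)$; since $H$ is connected and finite, this is well-defined, and I would argue $H$ is entirely contained in $L_i \cup L_{i+1} \cup \cdots$.

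First I would examine $H \cap L_i$. Because a hole is an induced cycle, every vertex of $H$ has exactly two neighbors in $H$. The strategy is to show that if $H$ uses a vertex $v \in L_i$ together with some vertex strictly below layer $i$, we reach a contradiction — so in fact all ``short'' holes are forced to live within two consecutive layers, and then within a single cyclic layer structure. A vertex $u\in L_{i+1}$ has at most one neighbor (its parent) in $L_i$, and its upward neighbors form a clique; since a hole contains no triangle-chords, at most two of $u$'s $H$-neighbors can come from the clique $N^\uparrow[u]$, and a clique of size $\geq 3$ cannot be part of an induced cycle. This rules out $H$ wandering through the cliques $N^\uparrow(\cdot)$ and confines the ``horizontal'' travel of $H$ to the directed-cycle edges within layers and the parent/child edges between consecutive layers.

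The heart of the argument, which I expect to be the main obstacle, is controlling how $H$ can pass through the between-layer edges described by rule~\ref{aN}. Consider a maximal sub-path of $H$ that enters $L_{i+1}$ from a vertex $v\in L_i$ via its child-path $L(v) = v_1 \dots v_{n_v}$. By the consecutivity built into rule~\ref{aN} (parents and children appear in the same cyclic order, and $v_{n_v}v'_1$ is an arc whenever $vv'$ is), if $H$ is to return to $L_i$ or close up, it must traverse one of these child-paths, and by rules~\ref{bS}/\ref{bB} each such path has length $\ell-2$ with no upward neighbors on its interior vertices $v_2,\dots$. Thus any chordless cycle that ``dips'' into such a path and comes back must pick up all $\ell-2$ interior vertices of the path (they have degree two within the relevant induced subgraph, forcing the hole through them), plus at least two more vertices to close up, giving length at least $\ell$. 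The careful bookkeeping is to verify that no shortcut chord exists: I would check that a vertex $v_j$ with $N^\uparrow(v_j)=\emptyset$ (the generic interior case) has its only neighbors among $v_{j-1}, v_{j+1}$ in $L_{i+1}$ and possibly children in $L_{i+2}$, so within $H$ it is a degree-two pass-through vertex whenever $H$ stays in two layers.

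Finally, to assemble the bound I would argue by choosing $H$ to be a shortest hole (or arguing for every hole directly) and show that whenever $H$ is not contained in a single layer, tracing it forces a full ``excursion'' of length at least $\ell - 2$ through an interior child-path together with the entry and exit vertices, yielding $|V(H)| \geq \ell$; and when $H$ \emph{is} contained in a single layer $L_i$, it must be the whole directed cycle of $L_i$, which has length at least $\ell$ by rule~\ref{a:hole}. The delicate point throughout is ruling out chords coming from the clique-structured upward neighborhoods, and I would handle this by repeatedly invoking that $N^\uparrow[u]$ is a clique (rule~\ref{a:l}), so a hole can meet it in at most two vertices, and that each vertex has at most one parent, so descents between layers are rigid.
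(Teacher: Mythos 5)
There is a genuine gap, and it stems from anchoring the argument at the wrong end of the hole. You fix $L_i$ to be the \emph{top} (lowest-indexed) layer meeting $H$ and then try to control how $H$ descends through the child-paths $L(v)$. But the crucial step --- that a hole which ``dips'' into a child-path must pick up all $\ell-2$ of its interior vertices because they are degree-two pass-throughs --- is justified, by your own account, only ``whenever $H$ stays in two layers.'' Nothing in the proposal forces a hole (even a shortest one) to stay in two layers: an interior vertex $v_j$ of $L(v)$ with $N^\uparrow(v_j)=\emptyset$ still has children in $L_{i+2}$, so $H$ can leave the child-path midway by descending, wander through arbitrarily many lower layers, and return elsewhere. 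Handling these multi-layer excursions is exactly the hard part, and the proposal does not address it. The clique observation you invoke (a hole meets the clique $N^\uparrow[u]$ in at most two vertices) is correct but does not by itself confine the hole, because the \emph{downward} neighborhoods (the sets of children) are not cliques.

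The paper's proof avoids all of this with one move: it takes $i$ to be the \emph{maximum} index with $H\cap L_i\neq\emptyset$. Then every vertex $u$ of $H\cap L_i$ has all of its $H$-neighbors outside $L_i$ in strictly lower-indexed layers, i.e.\ inside $N^\uparrow(u)$, which is a clique by rule~\ref{a:l}. Hence no vertex of $H\cap L_i$ is isolated in $H\cap L_i$ (its two hole-neighbors would be adjacent, giving a triangle inside an induced cycle of length at least $4$), so $H\cap L_i$ contains a maximal subpath $u_1\dots u_j$ of the cycle $L_i$ with $j\geq 2$, and both endpoints $u_1,u_j$ must have nonempty $N^\uparrow$. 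By rules~\ref{bS} and~\ref{bB}, vertices of $L_i$ with nonempty $N^\uparrow$ are spaced $\ell-2$ apart along the cycle, so $j\geq\ell-1$ and, together with at least one vertex of $H$ outside $L_i$, the hole has length at least $\ell$. If you want to salvage your approach, redo the analysis at the bottom-most layer met by $H$ rather than the top-most; as written, the argument does not go through.
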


\begin{proof}
  Let $H$ be a hole of $G$ and $i$ be the maximum integer such that
  $H$ contains vertices of $L_i$.  By rule~\ref{a:hole}, if
  $V(H) = L_i$, then $H$ has length at least $\ell$, so we may assume
  that $V(H) \neq L_i$, implying that some vertex of $L_i$ is not in
  $H$ since $L_i$ induces a hole.  Let $u_1\dots u_j$ be a subpath of
  $L_i$ that is included in $H$ and maximal with respect to this
  property. We have $j>1$ since $N^\uparrow(u_1)$ is a clique by
  rule~\ref{a:l}.  Hence, by rule \ref{bS} or \ref{bB},
  $j\geq \ell-1$.  Hence, $H$ has length at least $\ell$.
\end{proof}

The following shows that the layers of $G$ form a clique minor of $G$.

\begin{lemma}
  \label{l:layersCM}
  For all integers $i\geq 1$ and $i'>i$, every vertex $u \in L_i$ has at
  least one neighbor in $L_{i'}$.   
\end{lemma}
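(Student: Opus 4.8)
The plan is to prove, by induction on $k \geq i$, the stronger statement that $u$ has a descendant $z_k \in L_k$ with $u \in N^{\uparrow}[z_k]$, where $z_i = u$ and each $z_{k+1}$ is a child of $z_k$. By rule~\ref{a:l} together with the orientation convention of rule~\ref{a:orient}, for $k > i$ the condition $u \in N^{\uparrow}[z_k]$ forces $u \neq z_k$ (they lie in different layers), hence $u \in N^{\uparrow}(z_k)$, which exactly means that $z_k$ is a neighbor of $u$ lying in the higher layer $L_k$. Taking $k = i'$ then yields the lemma, so it suffices to maintain the invariant along the induction.

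For the inductive step, suppose $z_k \in L_k$ satisfies $u \in N^{\uparrow}[z_k]$, and consider how the children of $z_k$ in $L_{k+1}$ are built, by rule~\ref{bS} or rule~\ref{bB}. If rule~\ref{bS} applies, the first child $(z_k)_1$ has $N^{\uparrow}((z_k)_1) = N^{\uparrow}[z_k] \ni u$, so setting $z_{k+1} = (z_k)_1$ preserves the invariant. If rule~\ref{bB} applies, write $N^{\uparrow}(z_k) = \{w_1, \dots, w_{f(k+1)-1}\}$; the marked children $(z_k)_{(j-1)(\ell-2)+1}$ have upward neighbourhood $N^{\uparrow}[z_k] \setminus \{w_j\}$. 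Since $u \in N^{\uparrow}[z_k]$, the vertex $u$ survives in the $j$-th marked child precisely when $u \neq w_j$, so it is enough to locate an index $j$ with $u \neq w_j$ and set $z_{k+1}$ to the corresponding marked child.

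Identifying that index is the only delicate point, and it is where the hypotheses on $f$ enter. If $k = i$ then $u = z_k$, and since every $w_j$ lies in a strictly lower layer than $z_k$ we have $u \neq w_j$ for all $j$, so any marked child works. If $k > i$ then $u \in N^{\uparrow}(z_k)$, hence $u = w_m$ for a unique $m$, and we need a different index $j \neq m$; such an index exists as soon as there are at least two marked children, i.e.\ as soon as $f(k+1) - 1 \geq 2$. The key observation is that $k > i \geq 1$ gives $k \geq 2$, so $k+1 \geq 3$, and since $f$ is slow we have $f(k+1) \geq f(3) = 3$; thus $f(k+1) - 1 \geq 2$ and a suitable $j \neq m$ exists. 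This preserves the invariant and completes the induction, the base case $z_i = u$ being immediate and the existence of children at each step being guaranteed by rule~\ref{aN}.
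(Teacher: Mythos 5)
Your proof is correct and follows essentially the same route as the paper's: an induction on the layer index in which the inductive step splits on whether rule~\ref{bS} or rule~\ref{bB} builds the children, and in the latter case uses $f(k+1)\geq 3$ (hence at least two marked children) to find an index $j$ with $w_j\neq u$. The only cosmetic difference is that you carry the invariant $u\in N^{\uparrow}[z_k]$ explicitly along a chain of children, whereas the paper phrases the hypothesis as ``$u$ has a neighbor in $L_{i'}$'' and extends along $L(v)$; these are the same argument.
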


\begin{proof}
  We prove the property by induction on $i'$.  If $i'=i+1$, then the
  conclusion follows directly from rule~\ref{aN}.

  Assume that the property holds for some fixed $i'\geq i+1$.  Let us
  prove it for $i'+1$.  Note that $i'+1\geq 3$. So since $f$ is slow,
  $f(i'+1) \geq 3$.  Let $w$ be a vertex in $L_i$.  By the induction
  hypothesis, $w$ has a neighbor $v\in L_{i'}$.  It is enough to check
  that the path $L(v)$ defined in~\ref{bS} or~\ref{bB} contains a
  neighbor of $w$.  We use the notation of rules~\ref{bS}
  and~\ref{bB}.

  If $|N^\uparrow(v)| < f(i'+1)-1$, then
  rule~\ref{bS} applies. Hence, $v_1$ is adjacent to $w$ since
  $N^\uparrow (v_1) = N^\uparrow[v]$.
  If $|N^\uparrow(v)| = f(i'+1)-1$, then rule~\ref{bB} applies.  Since
  $f(i'+1)-1\geq 2$, there exists $j\in \{1, \dots, f(i'+1)-1\}$ such that
  $w_j \neq w$. Hence, the vertex $v_{(j-1)(\ell - 2) +1}$ from $L(v)$ is
  adjacent to $w$ since
  $N^\uparrow(v_{(j-1)(\ell - 2) +1}) = N^\uparrow[v] \setminus
  \{w_j\}$.
\end{proof}

\begin{lemma}
  \label{l:omega}
  For all integers $i\geq 2$,
  $\omega(G[L_1 \cup \dots \cup L_i]) = f(i)$.
\end{lemma}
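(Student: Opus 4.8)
The plan is to prove the two inequalities $\omega(G[L_1 \cup \dots \cup L_i]) \le f(i)$ and $\omega(G[L_1 \cup \dots \cup L_i]) \ge f(i)$ separately, the first by induction on $i$ and the second by exhibiting a clique of size $f(i)$.

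\medskip

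\textbf{Upper bound.} I would argue by induction on $i$ that no clique in $G[L_1 \cup \dots \cup L_i]$ has more than $f(i)$ vertices. For the base case $i=2$, a clique either lies within a single layer (and layers induce holes, so by rule~\ref{a:hole} contribute cliques of size at most~$2$, matching $f(2)=2$), or it uses vertices from both $L_1$ and $L_2$; here rule~\ref{a:l} caps the upward neighborhood of any vertex in $L_2$ at $f(2)-1 = 1$ vertex, so such a clique has size at most~$2 = f(2)$. For the inductive step, consider a clique $K$ in $G[L_1 \cup \dots \cup L_{i+1}]$ and let $j$ be the largest index with $K \cap L_{j} \ne \emptyset$. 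If $j \le i$ we are done by the induction hypothesis together with slowness of $f$ (which gives $f(i) \le f(i+1)$). Otherwise $j = i+1$; the key observation is that $K \cap L_{i+1}$ can contain at most one vertex, because two vertices of $L_{i+1}$ are adjacent only if they are consecutive on the directed cycle, and a hole has no triangle, so no vertex of a lower layer can be complete to two adjacent vertices of $L_{i+1}$ while those two remain adjacent without creating a shorter structure — more cleanly, a single vertex $u \in K \cap L_{i+1}$ together with $K \setminus \{u\} \subseteq N^\uparrow(u)$ gives $|K| \le |N^\uparrow(u)| + 1 \le f(i+1)$ by rule~\ref{a:l}. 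The care needed here is to rule out $K$ containing two adjacent vertices of $L_{i+1}$ plus common neighbors below; I would handle this by noting that if $u, u'$ are adjacent in $L_{i+1}$ then one is the endpoint $v_{n_v}$ of some path $L(v)$ and the other is the start $v'_1$ of the next, and by rules~\ref{bS}--\ref{bB} at most one of any two consecutive vertices on a path $L(v)$ has nonempty upward neighborhood, so they share no common lower neighbor and $|K| \le 2 \le f(i+1)$.

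\medskip

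\textbf{Lower bound.} I would construct a clique of size exactly $f(i)$ by tracking how rules~\ref{bS} and~\ref{bB} build up $N^\uparrow$ sets. The cleanest route is to prove, by induction on $i$, that there exists a vertex $v \in L_i$ with $|N^\uparrow[v]| = f(i)$, i.e.\ $|N^\uparrow(v)| = f(i)-1$; since $N^\uparrow[v]$ is a clique by rule~\ref{a:l}, this immediately gives a clique of the desired size. For the base case one checks $L_2$ contains a vertex with one upward neighbor. For the step, given $v \in L_i$ with $|N^\uparrow(v)| = f(i)-1$, I consider the first child $v_1$ of $v$: if $f(i+1) = f(i)+1$ then $|N^\uparrow(v)| = f(i)-1 < f(i+1)-1$, so rule~\ref{bS} applies and $N^\uparrow(v_1) = N^\uparrow[v]$ has size $f(i)$; since here $f(i+1)-1 = f(i)$, this vertex $v_1$ witnesses $|N^\uparrow(v_1)| = f(i+1)-1$ as required. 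If instead $f(i+1) = f(i)$, then $|N^\uparrow(v)| = f(i)-1 = f(i+1)-1$ already, so $v$ itself (viewed in $L_i$) witnesses the needed size, and one only needs a vertex in $L_{i+1}$ with the same upward clique size, obtained again from rule~\ref{bS} or~\ref{bB} applied to an appropriate parent. I would unify these by maintaining the invariant ``some vertex of $L_i$ has upward-closed neighborhood of size $f(i)$'' and verifying it is preserved.

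\medskip

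\textbf{Main obstacle.} I expect the genuine difficulty to be in the lower bound, specifically in showing the construction actually realizes cliques of the full size $f(i)$ rather than merely permitting them. Rule~\ref{bB} deliberately \emph{removes} one vertex from each child's upward neighborhood to avoid exceeding $f(i+1)$, so one must check that the growth mechanism in rule~\ref{bS} (which occurs precisely when $f$ increments, since $f$ is slow and increases by at most one per step) keeps pace and that the maximal clique size tracks $f$ exactly rather than lagging. Verifying that the witnessing vertex survives into each subsequent layer with its clique intact — using rule~\ref{aN} to follow a vertex to a child and the explicit assignments of rules~\ref{bS}--\ref{bB} — is the crux; the upper bound, by contrast, is a fairly direct structural consequence of rule~\ref{a:l} together with the triangle-freeness of the layer cycles.
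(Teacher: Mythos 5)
Your proposal is correct and follows essentially the same route as the paper: the lower bound by induction on $i$, maintaining a witness vertex $v\in L_i$ with $N^\uparrow[v]$ of size $f(i)$ and splitting on whether $f(i+1)=f(i)$ or $f(i)+1$ (rules \ref{bS} vs.\ \ref{bB}); and the upper bound by observing that a maximum clique meets its topmost layer $L_j$ in a single vertex $u$ (since adjacent vertices of a layer have no common neighbour in layers $\leq j$), whence $K\subseteq N^\uparrow[u]$ and rule~\ref{a:l} applies. The only cosmetic difference is that the paper's upper bound is stated directly for a maximum clique rather than by induction.
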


\begin{proof}
  Let us prove by induction on $i$ that for all $i\geq 1$, there
  exists a clique $K$ of $G[L_1 \cup \dots \cup L_i]$ on $f(i)$
  vertices such that $|K\cap L_i| = 1$.  This is clearly true for
  $i=1$.  We suppose inductively that such $K$ exists for some fixed
  $i\geq 1$ and call $v$ the unique vertex of $K\cap L_i$.  Since $f$
  is slow, $f(i+1) =f(i) +1$ or $f(i+1) = f(i)$.  In the former case,
  $|N^\uparrow(v)| = f(i)-1 = f(i+1)-2$, so rule \ref{bS} applies and
  $v$ has a unique child $u$ satisfying
  $N^\uparrow(u) = N^\uparrow[v]$.  Hence, $K\cup \{u\}$ is a clique on
  $f(i)+1 = f(i+1)$ vertices that contains $u$.  In the latter case,
  $|N^\uparrow(v)| = f(i)-1 = f(i+1)-1$, so rule \ref{bB} applies, and
  for any child of $u$ of $v$,
  $N^\uparrow(u) = N^\uparrow[v]\setminus \{w\}$ for some
  $w\in N^\uparrow(v)$.  Hence, we again find a clique satisfying the
  conclusion.

  We proved that $\omega(G[L_1 \cup \dots \cup L_i]) \geq f(i)$.  Let
  us prove the converse inequality. For $i=2$, it trivially holds.
  Hence, suppose $i\geq 3$.  Let $K$ be a maximum clique of
  $G[L_1 \cup \dots \cup L_i]$ and $j$ be the maximum integer such
  that $K\cap L_j \neq \emptyset$.  Note that $|K|\geq 3$.  By rules
  \ref{bS} and \ref{bB}, no two adjacent vertices in $L_j$ have a common
  neighbor in $L_{j'}$ if $j\geq j'$, so $|K\cap L_j| = \{u\}$ for
  some $u\in L_j$; consequently $K\subseteq N^\uparrow[u]$.  It
  follows by rule~\ref{a:l} that $|K|\leq f(i)$.
\end{proof}

An infinite directed path $P$ of $G$ is a \emph{vertical path starting
  in layer $i$} if $i\in \mathbb N \setminus \{0\}$,
$P= p_i p_{i+1} p_{i+2} \dots $ and for all $j\geq i$, $p_j\in L_j$.
Observe that $P$ may not be induced.

\begin{lemma}
  \label{l:pathD}
  Let $P= p_i p_{i+1} p_{i+2} \dots $ and  $Q=q_i q_{i+1} q_{i+2}
  \dots $ be two vertical paths starting in the same layer. If
  $p_i\neq q_i$, then $V(P) \cap V(Q) = \emptyset$.  
\end{lemma}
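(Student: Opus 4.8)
The plan is to exploit the single structural fact that, in the $(f,\ell)$-layered wheel, every vertex has \emph{at most one parent} (rule~\ref{aN}), so that the ``downward'' direction of a vertical path is uniquely determined. First I would record a reduction: since the layers partition $V(G)$ (rule~\ref{a:f}), vertices in distinct layers are distinct, and each vertical path starting in layer $i$ meets every layer $L_j$ with $j \geq i$ in exactly one vertex. Consequently, $P$ and $Q$ can only share a vertex inside a common layer, i.e.\ $V(P)\cap V(Q)\neq\emptyset$ if and only if $p_k = q_k$ for some integer $k \geq i$.

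Assuming towards a contradiction that such a $k$ exists, I would take the smallest one and call $u = p_k = q_k \in L_k$ the common vertex. Since $p_i \neq q_i$ by hypothesis, this minimal index satisfies $k \geq i+1$. Moreover $p_{k-1}\neq q_{k-1}$: when $k = i+1$ this is exactly the hypothesis $p_i\neq q_i$, and when $k > i+1$ it follows from the minimality of $k$. Both $p_{k-1}$ and $q_{k-1}$ lie in the layer $L_{k-1}$.

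The key step is then immediate. Because $P$ and $Q$ are directed paths with consecutive vertices in consecutive layers, $u$ is adjacent to $p_{k-1}$ (as the successor of $p_{k-1}$ in $P$) and to $q_{k-1}$ (as the successor of $q_{k-1}$ in $Q$), and both of these neighbors lie in $L_{k-1}$. By rule~\ref{aN}, the vertex $u$ has at most one neighbor in the previous layer, namely its unique parent. Hence $p_{k-1} = q_{k-1}$, contradicting $p_{k-1}\neq q_{k-1}$. I would conclude that no such $k$ exists, so $V(P)\cap V(Q)=\emptyset$.

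I do not expect any genuine obstacle here. The only points needing a little care are the initial reduction of an arbitrary vertex-intersection to a same-layer coincidence, which uses that the layers are pairwise disjoint, and the observation that a pair of consecutive vertices of a vertical path is a parent--child pair, so that the at-most-one-parent clause of rule~\ref{aN} applies directly.
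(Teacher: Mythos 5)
Your proposal is correct and follows exactly the paper's argument: the paper's proof is a one-line version of the same idea, taking the common vertex in the lowest-indexed layer and observing that it would have two parents, contradicting rule~\ref{aN}. Your write-up merely spells out the reduction to a same-layer coincidence and the identification of $p_{k-1}$ and $q_{k-1}$ as parents, both of which are correct.
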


\begin{proof}
  Otherwise, the  common vertex of $P$ and $Q$ in $L_j$ such that $j$
  is minimal has two parents, a contradiction to rule~\ref{aN}.  
\end{proof}

\begin{lemma}
  \label{l:pathH}
  If $p_i p_{i+1} p_{i+2} \dots $ is a vertical path, then for all
  $j\geq i$, $$N^\uparrow[p_j] \subseteq V(P) \cup N^\uparrow(p_i).$$ 
\end{lemma}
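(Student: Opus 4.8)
The plan is to proceed by induction on $j \geq i$, the whole content being a monotonicity property of upward neighborhoods along the parent--child relation. First I would record the following observation, which is immediate from the construction: if $u$ is a child of a vertex $v$, then $N^\uparrow(u) \subseteq N^\uparrow[v]$. Indeed, inspecting rules~\ref{bS} and~\ref{bB}, every child $u$ of $v$ satisfies either $N^\uparrow(u) = N^\uparrow[v]$ (the vertex $v_1$ in rule~\ref{bS}), or $N^\uparrow(u) = N^\uparrow[v]\setminus\{w_j\}$ for some $j$ (the vertices $v_{(j-1)(\ell-2)+1}$ in rule~\ref{bB}), or $N^\uparrow(u) = \emptyset$; in each of these cases $N^\uparrow(u) \subseteq N^\uparrow[v]$.

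Next I would observe that in a vertical path $P = p_i p_{i+1} p_{i+2}\dots$, for each $j\geq i$ the vertex $p_j$ is the parent of $p_{j+1}$: the arc $p_j p_{j+1}$ joins $p_j \in L_j$ and $p_{j+1}\in L_{j+1}$, so by rule~\ref{aN} the vertex $p_{j+1}$ is a child of $p_j$. Hence the observation above applies to each consecutive pair of the path, giving $N^\uparrow(p_{j+1}) \subseteq N^\uparrow[p_j]$.

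With these two facts the induction is short. For the base case $j=i$, we have $N^\uparrow[p_i] = \{p_i\}\cup N^\uparrow(p_i)$, and since $p_i\in V(P)$ this is contained in $V(P)\cup N^\uparrow(p_i)$. For the inductive step, assuming $N^\uparrow[p_j] \subseteq V(P) \cup N^\uparrow(p_i)$, I would write $N^\uparrow[p_{j+1}] = \{p_{j+1}\} \cup N^\uparrow(p_{j+1})$; here $p_{j+1}\in V(P)$, while $N^\uparrow(p_{j+1}) \subseteq N^\uparrow[p_j] \subseteq V(P)\cup N^\uparrow(p_i)$ by the consecutive-pair fact and the induction hypothesis. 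Combining these inclusions yields $N^\uparrow[p_{j+1}] \subseteq V(P)\cup N^\uparrow(p_i)$, as desired.

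I do not expect a genuine obstacle here; the only substantive point is the monotonicity $N^\uparrow(u)\subseteq N^\uparrow[v]$ for children, which is a direct case check against rules~\ref{bS} and~\ref{bB}. The one place requiring a little care is the role of the term $N^\uparrow(p_i)$ on the right-hand side: it is there precisely to absorb the upward neighbors of $p_i$ that lie in layers strictly below $L_i$ and are therefore not vertices of $P$. The induction never needs to revisit these, since at each step the newly introduced set $N^\uparrow(p_{j+1})$ is already controlled by $N^\uparrow[p_j]$, and so the fixed set $N^\uparrow(p_i)$ carries through unchanged.
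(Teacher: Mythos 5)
Your proposal is correct and is essentially the paper's own argument: an induction on $j$ whose only substantive ingredient is the inclusion $N^\uparrow(p_{j})\subseteq N^\uparrow[p_{j-1}]$, read off from rules~\ref{bS} and~\ref{bB}. The paper states this more tersely, but the decomposition $N^\uparrow[p_{j+1}]=\{p_{j+1}\}\cup N^\uparrow(p_{j+1})$ and the role of the fixed term $N^\uparrow(p_i)$ are exactly as you describe.
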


\begin{proof}
  Let us prove the lemma by induction on $j$.  If $j=i$ it trivially
  holds, so suppose $j>i$.  By rules \ref{bS} or \ref{bB},
  $N^\uparrow(p_j) \subseteq N^\uparrow[p_{j-1}]$.  Hence, by the
  induction hypothesis,
  $N^\uparrow[p_j] \subseteq V(P) \cup N^\uparrow(p_i)$.
\end{proof}

Let $p$ and $q$ be two vertices of $L_i$. We denote by
$p \overrightarrow{L_i} q$ the vertex-set of the unique directed path
of $G[L_i]$ from $p$ to $q$.  Note that if $p=q$, then
$p \overrightarrow{L_i} q = \{p\}$ and if $qp\in A(G)$, then
$p \overrightarrow{L_i} q = L_i$.  We set
$p \overleftarrow{L_i} q = \{p, q\} \cup (L_i \setminus p
\overrightarrow{L_i} q)$.  Observe that $G[p \overrightarrow{L_i} q]$
and $G[p \overleftarrow{L_i} q]$ edge-wise partition $G[L_i]$, so
$(p \overrightarrow{L_i} q, p \overleftarrow{L_i} q)$ is a separation
of $G[L_i]$ of order~2 (or 1 if $p=q$) since
$p \overrightarrow{L_i} q \cap p \overleftarrow{L_i} q = \{p, q\}$.

Let $P = p_ip_{i+1}\dots$ and $Q = q_iq_{i+1}\dots$ be two vertical
paths starting in the same layer $L_i$.  We define
$$A(P, Q) = \left(\bigcup_{u \in p_i \overrightarrow{L_i} q_i} N^\uparrow[u]\right) \cup \bigcup_{j>
  i} p_j \overrightarrow{L_j} q_j$$ and
$$B(P, Q) = \left(\bigcup_{1 \leq j \leq i} L_j\right) \cup \bigcup_{j> i} p_j \overleftarrow{L_j} q_j.$$

\begin{lemma}
  \label{l:apb}
  If $P = p_ip_{i+1}\dots$ and $Q= q_iq_{i+1}\dots$ are two vertical paths starting in the same layer
  $L_i$, then:
 $$A(P, Q) \cup B(P, Q) = V(G)$$

 and
 
  $$A(P, Q) \cap B(P, Q) = V(P) \cup V(Q) \cup \bigcup_{u \in V(p_i \overrightarrow{L_i} q_i)}
  N^\uparrow[u].$$
  \end{lemma}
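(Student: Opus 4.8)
The plan is to decompose both $A(P,Q)$ and $B(P,Q)$ into their two defining pieces and to exploit the fact that distinct layers are pairwise disjoint, which makes all the relevant cross-terms collapse. Write $A_1 = \bigcup_{u \in p_i \overrightarrow{L_i} q_i} N^\uparrow[u]$ and $A_2 = \bigcup_{j>i} p_j \overrightarrow{L_j} q_j$, so that $A(P,Q) = A_1 \cup A_2$; likewise write $B_1 = \bigcup_{1\leq j \leq i} L_j$ and $B_2 = \bigcup_{j>i} p_j \overleftarrow{L_j} q_j$, so that $B(P,Q) = B_1 \cup B_2$.

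For the union, I would argue layer by layer. Every layer $L_j$ with $j \leq i$ is contained in $B_1$. For $j > i$, the observation recorded just before the lemma gives that $G[p_j \overrightarrow{L_j} q_j]$ and $G[p_j \overleftarrow{L_j} q_j]$ edge-wise partition $G[L_j]$; in particular $p_j \overrightarrow{L_j} q_j \cup p_j \overleftarrow{L_j} q_j = L_j$, so $L_j \subseteq A_2 \cup B_2$. Since $V(G) = \bigcup_{j \geq 1} L_j$ and every $L_j$ is thus covered, while the reverse containment is immediate, we get $A(P,Q) \cup B(P,Q) = V(G)$.

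For the intersection, the key point is that $A_1, A_2, B_1, B_2$ occupy predictable layers. By rule~\ref{a:l}, $N^\uparrow[u] \subseteq \bigcup_{1 \leq j \leq i} L_j$ for each $u \in L_i$, so $A_1 \subseteq B_1$ (giving $A_1 \cap B_1 = A_1$), whereas $A_1 \cap B_2 = \emptyset$ since $B_2 \subseteq \bigcup_{j>i} L_j$. Symmetrically $A_2 \subseteq \bigcup_{j>i} L_j$ forces $A_2 \cap B_1 = \emptyset$. Expanding $A(P,Q) \cap B(P,Q) = (A_1 \cup A_2) \cap (B_1 \cup B_2)$ by distributivity, these three observations reduce the intersection to $A_1 \cup (A_2 \cap B_2)$.

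It remains to compute $A_2 \cap B_2$. Since distinct layers are disjoint, the intersection of the two unions splits as $\bigcup_{j>i} \bigl(p_j \overrightarrow{L_j} q_j \cap p_j \overleftarrow{L_j} q_j\bigr)$, and the separation property stated before the lemma gives $p_j \overrightarrow{L_j} q_j \cap p_j \overleftarrow{L_j} q_j = \{p_j, q_j\}$ for each $j$. Hence $A_2 \cap B_2 = \bigcup_{j>i} \{p_j, q_j\}$. Because $p_i, q_i \in p_i \overrightarrow{L_i} q_i$ we have $p_i, q_i \in A_1$, so adjoining the layer-$i$ vertices of $P$ and $Q$ is free, and therefore $A_1 \cup \bigcup_{j>i}\{p_j,q_j\} = A_1 \cup V(P) \cup V(Q)$, which is exactly the claimed intersection. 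The only real care needed is the bookkeeping of which piece occupies which layers; once that is in place the four-way expansion collapses immediately, so I do not expect a genuine obstacle.
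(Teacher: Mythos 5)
Your proof is correct and follows essentially the same route as the paper's: a layer-by-layer analysis showing that $\bigcup_{u \in p_i \overrightarrow{L_i} q_i} N^\uparrow[u]$ lies entirely in $\bigcup_{1\leq j\leq i} L_j \subseteq B(P,Q)$, while for $j>i$ the sets $p_j \overrightarrow{L_j} q_j$ and $p_j \overleftarrow{L_j} q_j$ cover $L_j$ and meet exactly in $\{p_j, q_j\}$. Your explicit four-way distributive expansion and the remark that $p_i, q_i$ already belong to $A_1$ just make the paper's bookkeeping more formal; there is no gap.
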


\begin{proof}
  By its definition, $B(P, Q)$ contains all layers $L_1, \dots, L_i$.
  Vertices of some layer $L_j$, $j>i$,  are all either in $p_j
  \overrightarrow{L_j} q_j$ or $p_j \overleftarrow{L_j} q_j$ since
  these two sets form a separation of $L_j$. Hence, they are either in
  $A(P, Q)$ or in $B(P, Q)$.  This proves the first equality.

  The only vertices of $A(P, Q)$ that are in layers $L_1, \dots,
  L_i$ are those from
  $\bigcup_{u \in p_i \overrightarrow{L_i} q_i} N^\uparrow[u]$, and it
  turns out that they are all in $B(P, Q)$.  In the next layers (so
  the $L_j$'s, $j>i$), the only vertices that are both in $A(P, Q)$
  and $B(P, Q)$ are the vertices of $P$ and $Q$ since for all $j>i$,
  $p_j \overrightarrow{L_j} q_j \cap p_j \overleftarrow{L_j} q_j =
  \{p_j, q_j\}$.  This proves the second equality.
\end{proof}

A vertex $w\in L_j$ is an \emph{ancestor} of a vertex $u\in L_i$ if
$wu\in A(G)$ and $j<i$.  A vertex $w\in L_j$ is a \emph{descendant}
of a vertex $u\in L_i$ if $uw\in A(G)$ and $j>i$.

\begin{lemma}
  \label{l:hb}
  If $P = p_ip_{i+1}\dots$ and $Q= q_iq_{i+1}\dots$ are two vertical paths starting in the same layer
  $L_i$ and $u\in A(P, Q) \setminus B(P, Q)$, then all the descendants of
  $u$ are in $A(P, Q) \setminus B(P, Q)$ and all ancestors of $u$ are
  in $A(P, Q)$.
\end{lemma}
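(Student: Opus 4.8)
The plan is to begin by making the set $A(P,Q)\setminus B(P,Q)$ completely explicit. Since $B(P,Q)$ contains all of $L_1\cup\cdots\cup L_i$, no vertex of $A(P,Q)$ in a layer $L_t$ with $t\le i$ can survive in $A(P,Q)\setminus B(P,Q)$; and for $j>i$ the pair $(p_j\overrightarrow{L_j}q_j,\,p_j\overleftarrow{L_j}q_j)$ is a separation of $L_j$ meeting in $\{p_j,q_j\}$. Combining this with \cref{l:apb} shows that a vertex lies in $A(P,Q)\setminus B(P,Q)$ exactly when it belongs to some $L_j$ with $j>i$ and is \emph{interior} to the directed path $p_j\overrightarrow{L_j}q_j$, i.e.\ lies on this path but is distinct from $p_j$ and $q_j$. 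I will also use that $A(P,Q)\cap L_i=p_i\overrightarrow{L_i}q_i=:S$ (because $N^\uparrow[v]\cap L_i=\{v\}$ for $v\in L_i$), that $S\subseteq A(P,Q)$, and that below $L_i$ one has $A(P,Q)=\bigcup_{v\in S}N^\uparrow(v)$.

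The combinatorial heart of the argument is an order-preservation statement, which I expect to be the main obstacle since it requires careful cyclic bookkeeping. By rule~\ref{aN} the child-paths $L(v)$, $v\in L_t$, occur along the directed cycle $L_{t+1}$ in the same cyclic order as their parents $v$ along $L_t$. Writing $p_t=a_0,a_1,\dots,a_m=q_t$ for the directed path $p_t\overrightarrow{L_t}q_t$, and noting that $p_{t+1}$ is a child of $p_t$ and $q_{t+1}$ a child of $q_t$ (rule~\ref{aN}), following the orientation of $L_{t+1}$ from $p_{t+1}$ to $q_{t+1}$ sweeps through the tail of $L(a_0)$, then all of $L(a_1),\dots,L(a_{m-1})$, then the head of $L(a_m)$. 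The delicate points are that this directed subpath goes the intended way around the cycle and that the two end child-paths are not over-counted. From this I will record, for every $t\ge i$ (with the convention that $p_i\overrightarrow{L_i}q_i=S$): \textbf{(a)} if $x\in L_{t+1}$ lies on $p_{t+1}\overrightarrow{L_{t+1}}q_{t+1}$ then its parent lies on $p_t\overrightarrow{L_t}q_t$; and \textbf{(b)} if $x=a_k$ with $0<k<m$ lies strictly inside $p_t\overrightarrow{L_t}q_t$, then its children $L(x)=L(a_k)$ lie strictly inside $p_{t+1}\overrightarrow{L_{t+1}}q_{t+1}$.

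For the descendants, let $u\in A(P,Q)\setminus B(P,Q)$, say $u\in L_j$ interior to $p_j\overrightarrow{L_j}q_j$. By~(b) every child of an interior vertex is again interior, hence in $A(P,Q)\setminus B(P,Q)$. To handle an arbitrary descendant $w$ (with $uw\in A(G)$ and $w\in L_{j''}$, $j''>j$) I induct on $j''$. Its parent $r$ satisfies $N^\uparrow(w)\subseteq N^\uparrow[r]$ by rules~\ref{bS} and~\ref{bB}, and since $u\in N^\uparrow(w)$ we get $u\in N^\uparrow[r]$, so either $r=u$ (and $w$ is a child of $u$) or $u\in N^\uparrow(r)$, meaning $r$ is a descendant of $u$ in layer $L_{j''-1}$ and hence in $A(P,Q)\setminus B(P,Q)$ by the induction hypothesis. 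In both cases $w$ is a child of an interior vertex, so $w\in A(P,Q)\setminus B(P,Q)$ by~(b).

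For the ancestors, form the parent chain $u=u^{(j)},u^{(j-1)},\dots,u^{(i)}$ of $u$ up to layer $L_i$, and extend it below $L_j$ into a vertical path $R=u^{(i)}u^{(i+1)}\dots$ starting in layer $i$ (possible since every vertex has a child by rule~\ref{aN}). Since $u=u^{(j)}$ is interior to $p_j\overrightarrow{L_j}q_j$, repeated application of~(a) gives $u^{(t)}\in p_t\overrightarrow{L_t}q_t$ for all $i\le t\le j$; in particular $u^{(i)}\in S$. Now \cref{l:pathH} applied to $R$ yields $N^\uparrow[u]\subseteq V(R)\cup N^\uparrow(u^{(i)})$. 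Take any ancestor $w\in N^\uparrow(u)$, which lies in a layer below $L_j$. If $w\in V(R)$ then, as $V(R)$ meets only layers $\ge i$ and contains exactly one vertex per layer, $w=u^{(t)}$ for some $i\le t\le j-1$, whence $w\in p_t\overrightarrow{L_t}q_t\subseteq A(P,Q)$ (for $t=i$ this is $S\subseteq A(P,Q)$). Otherwise $w\in N^\uparrow(u^{(i)})\subseteq N^\uparrow[u^{(i)}]\subseteq\bigcup_{v\in S}N^\uparrow[v]\subseteq A(P,Q)$, using $u^{(i)}\in S$. In either case $w\in A(P,Q)$, which completes the proof.
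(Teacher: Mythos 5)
Your argument is correct in substance and is built from the same two ingredients as the paper's proof --- the order-preservation consequence of rule~\ref{aN} (parents of vertices on the forward arc lie on the forward arc; children of interior vertices are interior) and \cref{l:pathH} to control $N^\uparrow$ along a vertical path --- but it is organized differently. The paper argues by minimal counterexample on an offending arc crossing from $A\setminus B$ to $B$, measuring the layer gap $|j-j'|$, and disposes of the boundary by a case analysis on whether the relevant parent lies on $V(P)$ or $V(Q)$. You instead first pin down $A(P,Q)\setminus B(P,Q)$ exactly as the set of interior vertices of the arcs $p_j\overrightarrow{L_j}q_j$, $j>i$ (which is correct, by \cref{l:apb} and the fact that $p_j\overrightarrow{L_j}q_j\cap p_j\overleftarrow{L_j}q_j=\{p_j,q_j\}$), isolate the sweep facts (a) and (b) explicitly, and then run a clean induction for descendants and a single application of \cref{l:pathH} for ancestors. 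This buys a more transparent proof: the $V(P)/V(Q)$ cases of the paper are absorbed into (a) and (b), and the descendant step reduces to ``a child of an interior vertex is interior.'' Note that the paper itself invokes your claim (b) without proof (``by rule~\ref{aN}, the children of $u$ are in the interior of $p_{j+1}\overrightarrow{L_{j+1}}q_{j+1}$''), so your level of detail on the cyclic bookkeeping is on par with the source.

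One step of your ancestor argument does not always go through as written: you ``form the parent chain $u^{(j)},\dots,u^{(i)}$ up to layer $L_i$,'' but a vertex of $G$ may have no parent (in rules~\ref{bS} and~\ref{bB} only the children $v_1$, resp.\ $v_{(j-1)(\ell-2)+1}$, acquire a parent; the other vertices of $L(v)$ have $N^\uparrow=\emptyset$), so the chain may terminate at some $u^{(t)}$ with $t>i$. The conclusion survives: if the chain stops at $u^{(t)}$ then $N^\uparrow(u^{(t)})=\emptyset$, so \cref{l:pathH} applied to the vertical path starting at $u^{(t)}$ gives $N^\uparrow[u]\subseteq\{u^{(t)},\dots,u^{(j)}\}$, and repeated use of your (a) places all of these on forward arcs in layers $\geq t>i$, hence in $A(P,Q)$. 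You should add this case (or observe that the chain reaching $L_i$ is not needed, only reaching the topmost ancestor of $u$).
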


\begin{proof}
  Let us first prove the claim about the descendants of $u$.  Suppose
  that the claim does not hold.  Then there exists $uw\in A(G)$ such
  that $u\in (A(P, Q) \setminus B(P, Q)) \cap L_j$,
  $w\in B(P, Q) \cap L_{j'}$ and $j'>j$. We choose such a pair $u, w$
  subject to the minimality of $j'-j$.  Since $uw\in A(G)$, by rule
  \ref{bS} or \ref{bB}, $w$ has a parent $v$ such that $u\in N[v]$.
  We have $v\neq u$ because otherwise, $w$ is a child of $u$ and by
  rule~\ref{aN}, the children of $u$ are in the interior of
  $p_{j+1} \overrightarrow{L_{j+1}}q_{j+1}$, so in
  $A(P, Q) \setminus B(P, Q)$.  If $v\in V(P)$, then by
  \cref{l:pathH}, $u\in V(P) \cup N^\uparrow(p_i)$, a contradiction to
  $u\in A(P, Q) \setminus B(P, Q)$.  So $v\notin V(P)$, and
  symmetrically $v\notin V(Q)$.  Hence, $v$ and $w$ (if
  $v\in p_{j'-1} \overrightarrow{L_{j'-1}}q_{j'-1}$) or $u, v$ (if
  $v\in p_{j'-1} \overleftarrow{L_{j'-1}}q_{j'-1}$) contradicts the
  minimality of $j'-j$.

  Let us now prove the claim about the ancestors of $u$.  Suppose that
  the claim does not hold.  Then there exists $wu\in A(G)$ such that
  $u\in (A(P, Q) \setminus B(P, Q)) \cap L_j$,
  $w\in (B(P, Q) \setminus A(P, Q))\cap L_{j'}$ and $j'<j$. We choose
  such a pair $u, w$ subject to the minimality of $j-j'$.  Since
  $wu\in A(G)$, by rule \ref{bS} or \ref{bB}, $u$ has a parent $v$
  such that $w\in N[v]$.  If $v\in V(P)$, then by \cref{l:pathH},
  $w\in V(P) \cup N^\uparrow(p_i)$, a contradiction to
  $w\in B(P, Q) \setminus A(P, Q)$.  So $v\notin V(P)$, and
  symmetrically $v\notin V(Q)$.  If $v\in p_i\overrightarrow{L_i}q_i$,
  then $w\in N^\uparrow[v]$, a contradiction to
  $w\in B(P, Q) \setminus A(P, Q)$.  Hence, by rule~\ref{aN}, $j\geq i+2$
  and
  $v\in p_{j-1}\overrightarrow{L_{j-1}} q_{j-1} \setminus \{p_{j-1},
  q_{j-1}\}$.  It follows that
  $v\in (A(P, Q) \setminus B(P, Q)) \cap L_{j-1}$.  Hence, $w$ and $v$
  contradict the minimality of $j-j'$ unless $j-j'=1$, in which case
  $w$ and $u$ contradict rule~\ref{aN}.
\end{proof}

\begin{lemma}
  \label{l:sep}
  If $P = p_ip_{i+1}\dots$ and $Q= q_iq_{i+1}\dots$ are two vertical
  paths starting in the same layer $L_i$, then $S= (A(P, Q), B(P, Q))$
  is a separation of~$G$.
\end{lemma}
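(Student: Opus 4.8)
The plan is to verify the two defining conditions of a separation. The first, $A(P,Q) \cup B(P,Q) = V(G)$, is already recorded in \cref{l:apb}, so it remains only to show that no edge of $G$ joins a vertex of $A(P,Q) \setminus B(P,Q)$ to a vertex of $B(P,Q) \setminus A(P,Q)$. I would argue by contradiction: suppose $xy$ is an edge with $x \in A(P,Q) \setminus B(P,Q)$ and $y \in B(P,Q) \setminus A(P,Q)$ (the reverse assignment being symmetric), and split according to whether $x$ and $y$ lie in different layers or in the same layer.

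For the cross-layer case, note that by rule~\ref{a:orient} every edge between two distinct layers joins a vertex to one of its descendants, equivalently a vertex to one of its ancestors; hence $y$ is either a descendant or an ancestor of $x$. Both possibilities are killed immediately by \cref{l:hb} applied to $x$: if $y$ is a descendant of $x$, then $y \in A(P,Q) \setminus B(P,Q)$, contradicting $y \in B(P,Q) \setminus A(P,Q)$; and if $y$ is an ancestor of $x$, then $y \in A(P,Q)$, contradicting $y \notin A(P,Q)$. This is the step carrying the real content, but since the delicate propagation argument has already been isolated in \cref{l:hb}, here it reduces to a one-line dichotomy.

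For the within-layer case, suppose $x, y \in L_j$. If $j \leq i$, then by the definition of $B(P,Q)$ the whole layer $L_j$ is contained in $B(P,Q)$, so neither endpoint lies in $A(P,Q) \setminus B(P,Q)$ and there is nothing to check. If $j > i$, I would unwind the definitions restricted to $L_j$: since each set $N^\uparrow[u]$ with $u \in p_i \overrightarrow{L_i} q_i$ is contained in $L_1 \cup \dots \cup L_i$, a vertex of $L_j$ lies in $A(P,Q)$ exactly when it lies in $p_j \overrightarrow{L_j} q_j$, and it lies in $B(P,Q)$ exactly when it lies in $p_j \overleftarrow{L_j} q_j$. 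Thus $x \in A(P,Q) \setminus B(P,Q)$ and $y \in B(P,Q) \setminus A(P,Q)$ force $x$ and $y$ onto opposite sides of the order-$2$ separation $(p_j \overrightarrow{L_j} q_j, p_j \overleftarrow{L_j} q_j)$ of $G[L_j]$ noted just before the lemma, contradicting that it is a separation.

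I expect the main obstacle to be purely organizational: tracking exactly which clause of the definitions of $A(P,Q)$ and $B(P,Q)$ is active in each layer, and confirming that all cross-layer edges are completely captured by the ancestor/descendant dichotomy of rule~\ref{a:orient}. The genuinely subtle claim—that membership in $A(P,Q) \setminus B(P,Q)$ is respected along arcs—has already been established in \cref{l:hb}, so this final lemma should follow simply by combining that result with the layerwise separation property and the coverage identity of \cref{l:apb}.
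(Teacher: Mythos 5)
Your proposal is correct and follows essentially the same route as the paper's proof: both reduce the coverage condition to \cref{l:apb}, handle cross-layer edges via the ancestor/descendant dichotomy together with \cref{l:hb}, and handle within-layer edges (necessarily in a layer $L_j$ with $j>i$) via the order-$2$ separation $(p_j \overrightarrow{L_j} q_j, p_j \overleftarrow{L_j} q_j)$ of $G[L_j]$. No gaps.
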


\begin{proof}
  Suppose that $S$ is not a separation. Since by \cref{l:hb}
  $V(G) = A(P, Q) \cup B(P, Q)$, there exists in $G$ an edge $uv$ such
  that $u\in A(P, Q) \setminus B(P, Q)$ and
  $v\in B(P, Q) \setminus A(P, Q)$.  If $u$ and $v$ are in the same
  layer $L_j$, then $j>i$ because $B(P, Q)$ contains all layers $L_1$,
  \dots, $L_i$; so we have
  $u\in p_j\overrightarrow{L_j}q_j \setminus \{p_j, q_j\}$ and
  $v\in p_j\overleftarrow{L_j}q_j \setminus \{p_j, q_j\}$, a
  contradiction since $u$ and $v$ are adjacent.  Otherwise, $v$ is a
  descendant or an ancestor of $u$, so by \cref{l:hb}, $v\in A(P, Q)$,
  a contradiction again.
\end{proof}

\section{Finite induced subgraphs of layered wheels}
\label{sec:finite}

Throughout this section, $f$ is a slow function, $\ell\geq 4$ is
an integer and $G$ is the $(f, \ell)$-layered wheel.  Moreover, we
consider a finite set $X\subseteq V(G)$ and an integer $k\geq 1$ such
that $\omega(G[X]) \leq k$ and we study $G[X]$.  We set $n=|X|$.

\begin{lemma}
  \label{l:chordal}
  If $X$ contains at most one vertex in each layer of $G$, then $G[X]$
  is a chordal graph.
\end{lemma}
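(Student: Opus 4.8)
The plan is to use Rose's characterization of chordal graphs quoted earlier: $G[X]$ is chordal if and only if every induced subgraph of it contains a simplicial vertex. Since any induced subgraph of $G[X]$ has the form $G[Y]$ for some $Y\subseteq X$, and such a $Y$ still meets each layer in at most one vertex, it suffices to exhibit a simplicial vertex in $G[X]$ itself, uniformly for every such $X$. So the whole argument reduces to a single structural observation, applied to the induced-subgraph hypothesis of Rose's theorem.

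First I would pick the vertex of $X$ lying in the highest layer: let $i$ be the largest index with $X\cap L_i\neq\emptyset$, and let $u$ be the vertex of $X\cap L_i$, which is unique by the at-most-one-per-layer assumption. Next I would identify the neighbors of $u$ inside $G[X]$. Any neighbor $w$ of $u$ in $G[X]$ lies in some layer $L_j$; it cannot satisfy $j>i$ by maximality of $i$, and it cannot satisfy $j=i$ since then $w\in X\cap L_i=\{u\}$ would force $w=u$. Hence every neighbor of $u$ in $G[X]$ lies in a strictly lower layer, so it belongs to $N^\uparrow(u)$. By rule~\ref{a:l}, $N^\uparrow(u)$ is a clique, and therefore the neighborhood of $u$ in $G[X]$, being a subset of $N^\uparrow(u)$, is also a clique. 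Thus $u$ is simplicial in $G[X]$.

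Finally I would close the argument by invoking Rose's characterization: the same reasoning applied to $G[Y]$ for every $Y\subseteq X$ produces a simplicial vertex (the vertex of $Y$ in its highest occupied layer), so all induced subgraphs of $G[X]$ have a simplicial vertex and $G[X]$ is chordal. Alternatively, one can bypass Rose entirely and verify the definition directly: in any putative hole of $G[X]$, the vertex $u$ in the highest layer would have its two hole-neighbors in lower layers, hence in the clique $N^\uparrow(u)$, making them adjacent—contradicting chordlessness.

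I do not expect a genuine obstacle here; the only point requiring care is ruling out a same-layer neighbor of the top vertex, which is exactly where the one-vertex-per-layer hypothesis is used, together with the fact from rule~\ref{a:l} that all lower-layer neighbors of a fixed vertex form a clique rather than an arbitrary set.
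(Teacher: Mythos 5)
Your proof is correct and follows essentially the same route as the paper's: take the unique vertex of $X$ in the highest occupied layer, observe that all its neighbors in $G[X]$ lie in strictly lower layers and hence form a clique by rule~\ref{a:l}, so it is simplicial, and then apply Rose's characterization to every induced subgraph. The added remark about arguing directly on a putative hole is a fine alternative but not needed.
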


\begin{proof}
  Consider the maximum integer $i$ such that $L_i\cap X \neq \emptyset$.
  The unique vertex $v$ of $L_i\cap X$ has all its
  neighbors are in layers  $L_j$ such that $j<i$ (it has no neighbor in
  $L_i$ by assumption, and no neighors in $L_j$, $j>i$, by the maximality
  of $i$).   So $N(v) \cap X = N^\uparrow(v) \cap X$ and $v$ is
  simplicial by rule~\ref{a:l}.  This proof can be applied to any induced
  subgraph of $G[X]$, so $G[X]$ is chordal.  
\end{proof}

The arc $vu \in A(G)$ is \emph{augmenting with respect to $X$} if $u$
is a child of $v$ and
$$N^{\uparrow}(u) \cap X = N^{\uparrow}[v] \cap X.$$  In what
follows, we will omit to write ``with respect to $X$'' since $X$ is
fixed for the entire section.

\begin{lemma}
  \label{l:aug}
  If $v \in L_i$ and $f(i+1)\geq k+2$, then there exists at least one
  child $u$ of $v$ such that $vu$ is augmenting.
\end{lemma}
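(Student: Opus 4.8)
The plan is to argue directly from the construction, splitting into the two cases \ref{bS} and \ref{bB} that govern how the children of $v$ are created. Recall that since $f$ is slow and $v\in L_i$, rule~\ref{a:l} together with $f(i)\leq f(i+1)$ gives $|N^\uparrow(v)|\leq f(i)-1\leq f(i+1)-1$, so exactly one of the two construction rules applies to $v$, and the argument splits accordingly. In both cases I simply exhibit the augmenting child explicitly.

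In the first case, $|N^\uparrow(v)|<f(i+1)-1$, so rule~\ref{bS} applies and $v$ has a child $v_1$ with $N^\uparrow(v_1)=N^\uparrow[v]$. Intersecting both sides with $X$ yields $N^\uparrow(v_1)\cap X=N^\uparrow[v]\cap X$ immediately, so the arc $vv_1$ is augmenting and nothing further is needed. The interesting case is $|N^\uparrow(v)|=f(i+1)-1$, handled by rule~\ref{bB}, where $N^\uparrow(v)=\{w_1,\dots,w_{f(i+1)-1}\}$ and each child with nonempty upward neighborhood \emph{misses} exactly one $w_j$, namely $N^\uparrow(v_{(j-1)(\ell-2)+1})=N^\uparrow[v]\setminus\{w_j\}$. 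The key observation—the only real content of the lemma—is that the hypothesis $f(i+1)\geq k+2$ forces $|N^\uparrow(v)|=f(i+1)-1\geq k+1$. Since $N^\uparrow(v)$ is a clique by rule~\ref{a:l} while $\omega(G[X])\leq k$, this clique of size at least $k+1$ cannot be entirely contained in $X$; hence some $w_j\notin X$. For that index $j$, the child $u=v_{(j-1)(\ell-2)+1}$ satisfies $N^\uparrow(u)=N^\uparrow[v]\setminus\{w_j\}$, and since deleting the vertex $w_j\notin X$ leaves the intersection with $X$ unchanged, we get $N^\uparrow(u)\cap X=N^\uparrow[v]\cap X$, so $vu$ is augmenting.

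I do not expect a genuine obstacle here: the whole proof is a pigeonhole-style use of the clique bound $\omega(G[X])\leq k$ against the size $f(i+1)-1\geq k+1$ of the clique $N^\uparrow(v)$. The only points deserving care are verifying that the two cases \ref{bS} and \ref{bB} are exhaustive for $v\in L_i$ (which follows from the slowness of $f$ as noted above), and checking that the index $(j-1)(\ell-2)+1$ indeed points to one of the children of $v$ listed in rule~\ref{bB} with the stated upward neighborhood.
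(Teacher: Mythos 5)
Your proof is correct and follows essentially the same route as the paper's: rule~\ref{bS} gives the augmenting child for free, and in the rule~\ref{bB} case the chain $f(i)-1\geq |N^\uparrow(v)| = f(i+1)-1\geq k+1$ combined with $\omega(G[X])\leq k$ yields a vertex $w_j\in N^\uparrow(v)\setminus X$ whose corresponding child is augmenting. The only cosmetic difference is that the paper phrases the second case as $|N^\uparrow(v)|\geq f(i+1)-1$ and derives equality from slowness, whereas you observe exhaustiveness of the two cases up front; the content is identical.
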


\begin{proof}
  Assume first that $|N^{\uparrow}(v)| < f(i+1) - 1$.  Then by rule
  \ref{bS}, the edge $vu$ is augmenting, where $u$ is the only child
  of $v$.  Indeed, we have $N^{\uparrow}(u) = N^{\uparrow}[v]$, so
  $N^{\uparrow}(u)\cap X = N^{\uparrow}[v] \cap X$ trivially holds.
  Thus we may assume that $|N^{\uparrow}(v)| \geq f(i+1) - 1$. Now:
  $$\begin{array}{rl} 
    f(i)-1&\geq |N^{\uparrow}(v)| \text{ by rule \ref{a:l}} \\ 
          &\geq f(i+1)-1 \text{ by assumption} \\
          &\geq f(i)-1 \text{ because $f$ is slow}
  \end{array}$$

  So $|N^{\uparrow}(v)| = f(i+1)-1 \geq k+1$. Since
  $\omega(G[X]) \leq k$, there exists $w\in N^{\uparrow}(v)\setminus X$.
  By rule \ref{bB}, $v$ has a child $u$ such that
  $N^{\uparrow}(u) = N^{\uparrow}[v]\setminus \{w\}$. Since $w\notin X$,
  we have $N^{\uparrow}(u) \cap X  = N^{\uparrow}[v] \cap X$, so the
  edge $vu$ is augmenting.
\end{proof}

For every vertex $v$ in $G$, we denote by $a(v)$ the \emph{augmenting
  child of $v$} that is a vertex defined as follows: if $v$ has a
child $u$ such that $vu$ is augmenting, then we choose such a child
$u$, and set $a(v) = u$. Otherwise, we choose any child $u$ of $v$ and
set $a(v) = u$.  There might be many ways to choose $a(v)$, but we
choose one of them and keep it for the rest of the proof.  For every
vertex~$v$, there exists a vertical path induced by $\{v,  a(v),
a(a(v)),  \dots\}$ that
we call the \emph{augmenting path out of $v$}.

When $f$ is a slow function, we set
$F(k) = \sup\{i \in \mathbb N\setminus\{0\} \mid f(i) \leq k\}$.  The function $F$
should be thought of as the maximum number of layers where $f$ is
at most $k$.  Observe that in the case where $f$ is eventually
constant, say $f(i) = c$ for all sufficiently large $i$, we have
$F(k) = +\infty$ for all $k\geq c$.

\begin{lemma}
  \label{l:sizeAug}
  If $v \in V(G)$ and $P$ is the augmenting path out of $v$, then
  $(V(P) \cap (\bigcup_{i\geq F(k+1)} L_i)) \cap X$ is a clique.  In
  particular, $$|V(P) \cap X| \leq F(k+1) + k -1.$$
\end{lemma}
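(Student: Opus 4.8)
The plan is to use the augmenting structure to force all high-layer vertices of $X$ on $P$ into a single clique of the form $N^\uparrow[\cdot]$. Write $v \in L_{i_0}$ and $P = p_{i_0}p_{i_0+1}\dots$ with $p_j \in L_j$, $p_{i_0} = v$, and $p_{j+1} = a(p_j)$ for all $j \geq i_0$. If $F(k+1) = +\infty$ (which occurs exactly when $f$ is eventually constant with value at most $k+1$), then $\bigcup_{i \geq F(k+1)} L_i = \emptyset$, so the clique assertion is vacuous and the stated bound $F(k+1)+k-1$ is infinite; hence I may assume $F(k+1)$ is finite.

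The first key step is to observe that every arc of $P$ leaving a layer of index at least $F(k+1)$ is augmenting. Indeed, if $j \geq F(k+1)$ then $j+1 > F(k+1)$, so $f(j+1) > k+1$, i.e.\ $f(j+1) \geq k+2$; by \cref{l:aug} the vertex $p_j$ then has an augmenting child, and since $a(\cdot)$ is defined to choose an augmenting child whenever one is available, the arc $p_j p_{j+1}$ is augmenting.

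Next I would follow the set $S_j := N^\uparrow[p_j] \cap X$ along these augmenting arcs. Unwinding the definition of augmenting gives $N^\uparrow(p_{j+1}) \cap X = N^\uparrow[p_j] \cap X = S_j$, and therefore $S_{j+1} = (\{p_{j+1}\}\cap X) \cup S_j \supseteq S_j$. So $(S_j)_{j \geq F(k+1)}$ is non-decreasing, and each $p_j \in X$ lies in $S_j$ and hence in every later $S_{j'}$. Letting $j^*$ be the largest index with $j^* \geq F(k+1)$ and $p_{j^*} \in X$ (if any such index exists), this shows that every vertex of $C := (V(P) \cap \bigcup_{i \geq F(k+1)} L_i) \cap X$ belongs to $N^\uparrow[p_{j^*}]$. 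Since $N^\uparrow[p_{j^*}]$ is a clique by rule~\ref{a:l}, $C$ is a clique, which is the first assertion.

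For the cardinality bound, $V(P)$ meets each layer in at most one vertex, so at most $F(k+1)-1$ vertices of $V(P)\cap X$ lie in layers of index below $F(k+1)$, while those in layers of index at least $F(k+1)$ constitute the clique $C$, of size at most $\omega(G[X]) \leq k$; summing yields $|V(P)\cap X| \leq (F(k+1)-1)+k$. I expect the only delicate point to be the index bookkeeping around $F$: confirming that $j \geq F(k+1)$ is precisely the condition that makes $f(j+1) \geq k+2$ and hence \cref{l:aug} applicable, so that $a(p_j)$ is genuinely augmenting. Everything downstream---the monotonicity of $S_j$ and the final count---is then routine.
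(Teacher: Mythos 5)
Your proposal is correct and follows essentially the same route as the paper: dispose of the case $F(k+1)=+\infty$, use \cref{l:aug} plus the choice of $a(\cdot)$ to see that every arc of $P$ beyond layer $F(k+1)$ is augmenting, and then track $N^\uparrow[p_j]\cap X$ along the path (your monotonicity of $S_j$ is exactly the paper's induction that $\{p_1,\dots,p_j\}\cap X\subseteq N^\uparrow[p_j]\cap X$), finishing with the same count of at most $F(k+1)-1$ low-layer vertices plus a clique of size at most $k$.
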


\begin{proof}
  If $F(k+1) = +\infty$, then the conclusion trivially holds (in particular
  $\{i \in \mathbb N\setminus\{0\} \mid i \geq F(k+1)\} = \emptyset$).  Otherwise,
  $V(P) \cap (\bigcup_{i\geq F(k+1)} L_i)$ induces an infinite  vertical path
  $p_1p_2\dots $,  and each $p_j$ for $j\geq 1$ is in a layer
  $L_i$ such that $f(i+1) \geq k+2$.  Hence, by \cref{l:aug}, there
  exists an augmenting arc $p_ju$ for all $j\in \mathbb N\setminus\{0\}$, so
  by the definition of  augmenting paths, the arc $p_jp_{j+1}$ is
  augmenting.

  Let us now prove that $\{p_1, p_2, \dots\} \cap X$ induces a clique.
  We prove by induction on $j$ a stronger fact:
  $\{p_1, \dots, p_j\} \cap X \subseteq N^\uparrow[p_j] \cap X$ (which
  induces a clique by rule~\ref{a:l}).  This is clear for $j=1$, and
  assuming it is proved for a fixed $j$, it follows for $j+1$ from:
  $$\begin{array}{rl} 
      \{p_1, \dots, p_{j+1}\} \cap X  &= (\{p_1, \dots, p_j\} \cap X)
                                        \cup  (\{p_{j+1}\} \cap X)\\
                                      & \subseteq  (N^\uparrow[p_j] \cap X) \cup
                                        (\{p_{j+1}\} \cap X) \text{ (induction
                                        hypothesis)}\\
                                      & =
                                        (N^{\uparrow}(p_{j+1})\cap X) \cup
                                        (\{p_{j+1}\} \cap X) \text{
                                        ($p_jp_{j+1}$
                                        augmenting)}\\
                                      & =            N^{\uparrow}[p_{j+1}] \cap X                                                    
  \end{array}$$

  Hence, $\{p_1, p_2, \dots\} \cap X$ is a clique and
  therefore contains at most $k$ vertices.  Together with the vertices
  potentially in layers from 1 to $F(k+1) - 1$, we obtain that $V(P) \cap X$ contain at
  most $F(k+1) + k - 1$ vertices. 
\end{proof}

A separation $(A, B)$ of $G$ is \emph{fair} if there exists a pair
of vertical paths $P= p_i p_{i+1}\dots$, $Q = q_i q_{i+1}\dots$ such that:
\begin{itemize}
\item $p_i$ and $q_i$ are in the same layer $L_i$,
\item $|p_i \overrightarrow{L_i} q_i| \leq \ell - 1$,
\item The paths $P\setminus p_i = p_{i+1}p_{i+2}\dots $ and
  $Q\setminus q_i = q_{i+1}q_{i+2}\dots $ are augmenting paths,
\item $A = A(P, Q)$ and $B = B(P, Q)$ and
\item $|A \cap X| \geq n/3$ (recall that $n=|X|$).
\end{itemize}

Note that the notion of fair separation is defined for $G$ (and not
only for $G[X]$), but it depends on $X$.  Observe that $P$ and $Q$ are
possibly not augmenting (but removing their first vertex yields an
augmenting path).

\begin{lemma}
  \label{l:existF}
  There exists a fair separation in $G$. 
\end{lemma}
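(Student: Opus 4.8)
The plan is to build, from the $\ell$ vertices of the bottom layer $L_1$, a family of $\ell$ pairwise-disjoint vertical paths whose pairwise separations carve $G$ into cyclically ordered ``sectors'', and then to select a suitable contiguous block of sectors by an averaging argument so that the resulting separation captures at least a third of $X$ on its $A$-side while keeping the forward arc in $L_1$ short enough.

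Concretely, write $L_1 = c_0 c_1 \cdots c_{\ell-1}$ for the directed cycle of length $\ell$ (rule~\ref{a:hole}), and for each $j \in \{0, \dots, \ell-1\}$ let $P_j = c_j\, a(c_j)\, a(a(c_j)) \cdots$ be the augmenting path out of $c_j$. Then $P_j \setminus c_j$ is exactly the augmenting path out of $a(c_j)$, so the tails are augmenting as required by fairness, and since the starting vertices $c_j$ are distinct, \cref{l:pathD} gives that the $P_j$ are pairwise vertex-disjoint. The first key step is to record that these paths meet each layer in cyclic order: by rule~\ref{aN} parents and their children occur in the same cyclic order, so by induction on the layer index the vertices $(P_0)_m, \dots, (P_{\ell-1})_m$ appear in this cyclic order around $L_m$ for every $m$. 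Consequently the forward arcs $(P_j)_m \overrightarrow{L_m} (P_{j+1})_m$ (indices mod $\ell$) cover $L_m$ and overlap only in path-vertices, so if $O_j$ denotes the set of vertices lying strictly between $P_j$ and $P_{j+1}$, the sectors $T_j := O_j \cup V(P_{j+1})$ form a partition of $V(G)$. Setting $w_j = |T_j \cap X|$ we get $\sum_{j} w_j = n$, and for any $a \neq b$ the separation $S = (A(P_a, P_b), B(P_a, P_b))$, which is genuinely a separation by \cref{l:sep}, satisfies $\bigcup_{j=a}^{b-1} T_j \subseteq A(P_a, P_b)$ by \cref{l:apb}, hence $|A(P_a, P_b) \cap X| \geq \sum_{j=a}^{b-1} w_j$, while $|c_a \overrightarrow{L_1} c_b| = (b - a) + 1$.

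It then remains to find a contiguous cyclic block of at most $\ell - 2$ sectors whose total weight is at least $n/3$; the block $T_a, \dots, T_{b-1}$ (so that $b = a + (\text{block length})$) then yields a fair separation, since its length at most $\ell - 2$ forces $|c_a \overrightarrow{L_1} c_b| \le \ell - 1$. For this I would argue: if some single sector has $w_j \ge n/3$, take the one-sector block $\{T_j\}$ (length $1 \le \ell - 2$ since $\ell \ge 4$); otherwise every $w_j < n/3$, and growing a block one sector at a time from an arbitrary start, I stop at the first block whose weight reaches $n/3$. Because each sector contributes less than $n/3$, this block has weight in $[n/3, 2n/3)$, and its complementary block therefore has weight exceeding $n/3$; if the chosen block had length $\ell - 1$ or $\ell$, its complement would be a single sector of weight $> n/3$, contradicting $w_j < n/3$, so the block has length at most $\ell - 2$, as needed.

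The routine-but-delicate part, and the main obstacle, is the cyclic-order bookkeeping in the second paragraph: verifying from rule~\ref{aN} that the $\ell$ vertical paths split every layer into consecutive arcs, so that the sectors genuinely partition $V(G)$ and so that $A(P_a, P_b)$ contains the full block $T_a, \dots, T_{b-1}$. Once this partition is in place, both the arc-length constraint and the weight bound follow mechanically, and the combinatorial averaging is elementary.
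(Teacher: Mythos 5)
Your argument is correct, but it is considerably more elaborate than the paper's. The paper also starts from augmenting paths rooted in $L_1$ and also concludes by averaging, but it uses only \emph{two} paths: pick two distinct non-adjacent vertices $p,q\in L_1$ (they exist since $\ell\geq 4$), so that both $|p\overrightarrow{L_1}q|\leq \ell-1$ and $|q\overrightarrow{L_1}p|\leq \ell-1$, let $P,Q$ be the augmenting paths out of $p$ and $q$, and observe that $A(P,Q)\cup A(Q,P)=V(G)$ because $p_m\overrightarrow{L_m}q_m\cup q_m\overrightarrow{L_m}p_m=L_m$ for every layer; hence one of the two sides already contains at least $n/2\geq n/3$ vertices of $X$, and the corresponding separation is fair by \cref{l:sep} and \cref{l:pathD}. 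This completely sidesteps what you yourself identify as the main obstacle in your write-up: the cyclic-order bookkeeping needed to show that your $\ell$ sectors partition $V(G)$ and that a contiguous block of sectors sits inside $A(P_a,P_b)$. That claim is true (it follows by induction on the layer index from rule~\ref{aN}, exactly as you sketch), and your cyclic averaging over blocks of length at most $\ell-2$ is sound (modulo the degenerate case $n=0$, where everything is trivial), so your proof does go through; it just pays for a finer decomposition that the statement does not require. The only thing your extra machinery buys is a separation with $|A\cap X|$ possibly closer to $n/3$ from above together with control on which sector it lives in, neither of which is needed here or later in the paper, since \cref{l:balancedS} re-derives the balance it needs by a separate descent argument.
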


\begin{proof}
  Consider two distinct and non-adjacent vertices $p$ and $q$ in the
  first layer $L_1$.  Note that
  $|p \overrightarrow{L_1} q| \leq \ell - 1$ and
  $|q \overrightarrow{L_1} p| \leq \ell - 1$ because $L_1$ induces a
  cycle of length $\ell$ by the definition of layered wheels.  Let $P$
  and $Q$ be the two augmenting paths starting at $p$ and $q$
  respectively. By \cref{l:sep}, $(A(P, Q), B(P, Q))$ and
  $(A(Q, P), B(Q, P))$ are separations of $G$.  To prove that one of
  them is fair, only the condition on $|A\cap X|$ remains to be
  checked.  Since $P$ and $Q$ are vertex-disjoint by \cref{l:pathD},
  we have $A(P, Q) \cup A(Q, P) = V(G)$.  Hence, either
  $|A(P, Q) \cap X| \geq n/3$ or $|A(Q, P) \cap X| \geq n/3$, so the
  condition holds for at least one separation.
\end{proof}

Observe that in the following lemma, $F(k+1) = +\infty$ is possible.
In this case, the statement becomes trivial, but is stays true : it
says that some subset of $X$ (which is finite) has size at most
$+\infty$.

\begin{lemma}
  \label{l:balancedS}
  There exists a balanced separation of $G[X]$ of order at most
   $$ 2F(k+1) + (\ell +1) k -2.$$
\end{lemma}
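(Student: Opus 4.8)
The plan is to produce a single \emph{fair} separation that happens to be balanced; by \cref{l:sep} it is a separation of $G$, and intersecting it with $X$ yields a separation of $G[X]$ of the same order. So it suffices to exhibit a fair separation $(A,B)=(A(P,Q),B(P,Q))$ with $|(A\setminus B)\cap X|\le 2n/3$ and $|(B\setminus A)\cap X|\le 2n/3$, and to bound the order of an arbitrary fair separation by $2F(k+1)+(\ell+1)k-2$.

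First I would bound the order. By \cref{l:apb}, the separator of a fair separation is $A\cap B=V(P)\cup V(Q)\cup\bigcup_{u\in p_i\overrightarrow{L_i}q_i}N^{\uparrow}[u]$, and I split $(A\cap B)\cap X$ according to layers. Every vertex lying in layers $L_1,\dots,L_i$ belongs to $\bigcup_{u}N^{\uparrow}[u]$; since $|p_i\overrightarrow{L_i}q_i|\le \ell-1$ there are at most $\ell-1$ such sets $N^{\uparrow}[u]$, each a clique by rule~\ref{a:l}, hence meeting $X$ in at most $\omega(G[X])\le k$ vertices, for a contribution of at most $(\ell-1)k$. Every vertex lying in layers $L_{i+1},L_{i+2},\dots$ belongs to $V(P)\cup V(Q)$, more precisely to $V(P\setminus p_i)\cup V(Q\setminus q_i)$ (the starting vertices $p_i,q_i$ being already counted among the layers $\le i$). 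As $P\setminus p_i$ and $Q\setminus q_i$ are augmenting paths, \cref{l:sizeAug} bounds each of $|V(P\setminus p_i)\cap X|$ and $|V(Q\setminus q_i)\cap X|$ by $F(k+1)+k-1$, for a contribution of at most $2(F(k+1)+k-1)$. Summing, the order is at most $(\ell-1)k+2(F(k+1)+k-1)=2F(k+1)+(\ell+1)k-2$, whatever the starting layer $i$ is.

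It remains to find a balanced fair separation, and this is the main obstacle. Starting from the fair separation provided by \cref{l:existF}, note that the defining inequality $|A\cap X|\ge n/3$ is exactly equivalent to $|(B\setminus A)\cap X|\le 2n/3$ (since $X\setminus A=(B\setminus A)\cap X$), so one of the two balance conditions holds automatically for every fair separation. The difficulty is the other condition, $|(A\setminus B)\cap X|\le 2n/3$, i.e.\ that the region $A\setminus B$ enclosed between $P$ and $Q$ is not too heavy. If the separation of \cref{l:existF} already satisfies it we are done; otherwise $A\setminus B$ carries more than $2n/3$ of $X$. By \cref{l:hb} this region is a disjoint union of ``strips'', each the descendant-set of one interior vertex of the layer-$(i+1)$ arc, and I would locate balance inside it: if some single strip, the set of descendants of one vertex $v$, still carries more than $2n/3$ of $X$, I recurse into that strip, which is itself a layered-wheel–like substructure whose fair separations extend to a fair separation of $G$ by placing all vertices outside the strip on the $B$-side; otherwise the strips can be regrouped into a fair separation whose enclosed region falls in $[n/3,2n/3]$. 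The delicate points are that this process terminates (each recursion strictly increases the starting layer, and $X$ is finite) and that it preserves the two-augmenting-path-plus-arc form, so that the order bound established above continues to apply; this is where the real work lies.
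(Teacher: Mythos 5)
Your bound on the order of an arbitrary fair separation is correct and is exactly the paper's computation (you are in fact slightly more careful than the paper in using that each $N^{\uparrow}[u]$ is a clique and hence meets $X$ in at most $k$ vertices). The real content of the lemma, however, is the balance argument, and there your sketch has genuine gaps beyond the ones you flag. First, the claimed decomposition of $A\setminus B$ into descendant-strips of the interior vertices of $p_{i+1}\overrightarrow{L_{i+1}}q_{i+1}$ is false: by rules \ref{bS} and \ref{bB}, in each path $L(v)$ only the vertices $v_{(j-1)(\ell-2)+1}$ have a parent, and all the others satisfy $N^{\uparrow}=\emptyset$; so every layer $L_j$ with $j>i+1$ contributes to $A\setminus B$ many ancestorless vertices that lie in no strip. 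Second, your regrouping step presupposes that you can cut the region between $P$ and $Q$ by new vertical paths through parented interior vertices of the layer-$(i+1)$ arc, but there may be \emph{no} such vertex at all. This is precisely the paper's first case: rule~\ref{aN} then forces $p_i=q_i$ or $p_iq_i\in A(G)$ and the arc to have exactly $\ell-1$ vertices, and the only available move is to re-root the separation at $(P\setminus p_i,\,Q\setminus q_i)$, one layer deeper, at the cost of up to two vertices of $A\cap X$ --- which is why the paper needs $n\geq 6$ and the stronger hypothesis $|(A\cap X)\setminus(B\cap X)|>2n/3$ to preserve fairness. Your sketch contains no analogue of this move.

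Third, your termination measure (``each recursion strictly increases the starting layer'') does not account for the splitting step, where the starting layer stays equal to $i$; and your thresholds do not close: if every strip carries at most $2n/3$ of $X$, a minimal union of consecutive strips of weight at least $n/3$ can still weigh almost $n$. The paper sidesteps both problems by an extremal rather than recursive argument: it takes a fair separation maximizing $i$ and, subject to that, minimizing $|p_{i+1}\overrightarrow{L_{i+1}}q_{i+1}|$, and shows that if it is unbalanced then either the re-rooting move increases $i$, or a single split through one parented interior vertex $u$ gives $A=A(P,R)\cup A(R,Q)$ with $R$ the augmenting path out of $u$ prefixed by its parent, so that one of the two pieces retains weight at least $n/3$ (since $|A\cap X|>2n/3$) while strictly shortening the layer-$(i+1)$ arc --- a contradiction in either case. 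To salvage your write-up you would need to replace the strip decomposition and the regrouping by this two-case analysis, or something equivalent to it.
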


\begin{proof}
  If $n \leq 5$, then the conclusion trivially holds because
  $\ell + 1 \geq 5$ and $F(k+1)\geq 2$, so $(X, X)$ is a separation
  satisfying the conclusion.  We therefore assume from here on that
  $n\geq 6$.

  Suppose now that $(A, B)$ is a fair separation with notation as in
  the definition.  The order of $(A \cap X, B \cap X)$ is at most
  $ 2F(k+1) + (\ell +1) k -2$ because by \cref{l:apb},
  $$A(P, Q) \cap B(P, Q) = V(P) \cup V(Q) \cup \bigcup_{u \in p_i
    \overrightarrow{L_i} q_i} N^\uparrow[u],$$ by \cref{l:sizeAug},
  $|V(P\setminus p_i) \cap X| \leq F(k+1) + k -1$, a similar
  inequality holds for $Q$,
  $|p_i \overrightarrow{L_i} q_i|\leq \ell-1$ since the separation is
  fair, and by rule~\ref{a:l} and the choice of $k$, for all
  $u\in p_i \overrightarrow{L_i} q_i$, $|N^\uparrow[u]| \leq k$.
  Also, by the definition of fair separations, $|A \cap X| \geq n/3$.
  Hence, $|(B\cap X)\setminus (A\cap X)|\leq 2n/3$.  Hence, either
  $(A \cap X, B \cap X)$ is balanced or
  $|(A\cap X) \setminus (B\cap X)| > 2n/3$.  We therefore assume from
  here on that for all fair separations $(A, B)$,
  $|(A\cap X) \setminus (B\cap X)| > 2n/3$ and we look for a
  contradiction.
  
  Note that for all fair separations $(A, B)$,
  $(A\cap X) \setminus (B\cap X)$ is non-empty.  By considering a
  layer $L_t$ containing a vertex of $X$ and maximal with this
  property (since $X$ is finite), we see that for all fair separations
  of $G$, the integer $i$ in the definition is at most $t-1$ (in fact,
  this implies that there exist finitely many fair separations).
  
  Since a fair separation exists by \cref{l:existF}, consider a fair
  separation $(A, B)$ of $G$ with notation as in the definition, such
  that $i$ is maximal (this is well defined since $i\leq t-1$) and,
  among all separations with $i$ maximal, such that
  $|p_{i+1} \overrightarrow{L_{i+1}} q_{i+1}|$ is minimal.

  Suppose that no internal vertex of
  $p_{i+1} \overrightarrow{L_{i+1}} q_{i+1}$ has a parent.  Then by
  rule~\ref{aN}, either $p_i=q_i$ or $p_iq_i\in A(G)$.  Moreover, by
  rules \ref{bS} and \ref{bB},
  $p_{i+1} \overrightarrow{L_{i+1}} q_{i+1}$ induces a path on
  $\ell-1$ vertices.  We then set $P' = P\setminus p_i$ and
  $Q'= Q \setminus q_i$, $A'= A(P', Q')$ and $B'=B(P', Q')$.  It is a
  routine matter to check that $(A', B')$ is a fair separation, except
  for the condition on $|A'\cap X|$.  But at most two vertices of $A'$
  are not in $A$, because by rule~\ref{bS} and~\ref{bB}, at most one
  vertex is in $N^\uparrow[p_i] \setminus N^\uparrow(p_{i+1})$ and at
  most one vertex is in
  $N^\uparrow[q_i] \setminus N^\uparrow(q_{i+1})$.  Hence,
  $|A'\cap X| \geq |A\cap X| - 2 > 2n/3 - 2 = n/3 + (n-6)/3 \geq n/3$
  since $n\geq 6$.  Consequently, the condition on the size of $A'\cap
  X$ is
  satisfied and $(A', B')$ contradicts the optimality of $(A, B)$
  since $i+1 > i$.

  We may therefore assume that some vertex $u$ in the interior of
  $p_{i+1} \overrightarrow{L_{i+1}} q_{i+1}$ has a parent~$v$.  By
  rule~\ref{aN}, we have $v\in p_{i} \overrightarrow{L_{i}} q_{i}$.
  Let $R'$ be the augmenting path out of $u$ and $R=vuR'$.  Set
  $A' = A(P, R)$, $A'' = A(R, Q)$, $B' = B(P, R)$ and $B'' = B(R, Q)$.
  It is a routine matter to check that $(A', B')$ and $(A'', B'')$ are
  fair separations, except for the condition on the size of $A'\cap X$ or
  $A''\cap X$.  We have $A = A' \cup A''$.  Hence, since $|A\cap X|\geq 2n/3$, either
  $|A'\cap X| \geq n/3$ or $|A''\cap X| \geq n/3$.  Hence, one of
  $(A', B')$ or $(A'', B'')$ is fair and contradicts the minimality of
  $|p_{i+1} \overrightarrow{L_{i+1}} q_{i+1}|$.
\end{proof}

The following is the main result about the treewidth of finite induced
subgraphs $H$ of layered wheels.  Observe that if
$F(\omega(H)+1) = +\infty$, then the first conclusion trivially holds.

\begin{lemma}
  \label{th:tw}
  For all integers $\ell \geq 4$ and all slow functions $f$, the
  $(f, \ell)$-layered wheel $G$ satisfies:
  \begin{itemize}
  \item For every finite induced subgraph $H$ of $G$:
    $$\tw(H) \leq 15 \left(2 F(\omega(H)+1)   + (\ell
      +1) \omega(H) -2\right).$$
  \item For all integers $k\geq 2$ such that $F(k-1)$ is finite and
    all integers $t\leq F(k)$, there exists a finite induced subgraph
    $H$ of $G$ satisfying:
    $$\omega(H) = k \text{ and } \tw(H)  \geq t-1.$$
  \end{itemize}
\end{lemma}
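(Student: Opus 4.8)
The plan is to assemble both parts from results already in place, since the genuinely hard estimate has been carried out in \cref{l:balancedS} and what remains is bookkeeping. The upper bound follows by feeding \cref{l:balancedS} into \cref{th:DvNo}, while the lower bound is obtained by keeping an appropriate finite initial segment of $G$ and reading its layers as a clique minor via \cref{l:Kminor}.

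For the upper bound, I would set $X = V(H)$ and $k = \omega(H)$. The role of \cref{th:DvNo} is to reduce the problem to bounding the separation number $\sn(H)$, which by definition requires a balanced separation of controlled order for \emph{every} induced subgraph of $H$. So take an arbitrary $X' \subseteq X$: as $G[X']$ is an induced subgraph of $G[X]$ we have $\omega(G[X']) \le \omega(G[X]) = k$, and \cref{l:balancedS}, applied with this value of $k$ as a valid upper bound for $\omega(G[X'])$, yields a balanced separation of $G[X']$ of order at most $2F(k+1) + (\ell+1)k - 2$. Since $F$ is non-decreasing, the bound does not worsen for the smaller clique numbers of subgraphs, so $\sn(H)$ is bounded by this same quantity, and \cref{th:DvNo} converts this into the asserted upper bound on $\tw(H)$ (up to the absolute constant $15$).

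For the lower bound, the crux is choosing exactly how many layers to retain. I would set $s = \max\{t,\ F(k-1)+1\}$ and take $H = G[L_1 \cup \cdots \cup L_s]$, which is finite because every layer is finite. The clique number is forced to equal $k$ precisely: finiteness of $F(k-1)$ gives $f(F(k-1)+1) > k-1$, while slowness gives $f(F(k-1)+1) \le f(F(k-1)) + 1 \le k$, so $f(F(k-1)+1) = k$; combined with $f$ non-decreasing and $s \le F(k)$, this forces $f(s) = k$, whence $\omega(H) = f(s) = k$ by \cref{l:omega}. For the treewidth, each $L_i$ with $i \le s$ induces a connected subgraph and, by \cref{l:layersCM}, any two distinct layers are adjacent, so $(L_1, \dots, L_s)$ is a clique minor of $H$; \cref{l:Kminor} then gives $\tw(H) \ge s - 1 \ge t - 1$.

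The step I expect to require the most care is pinning down $\omega(H) = k$ \emph{exactly}: both the finiteness of $F(k-1)$ and the slowness of $f$ (which together force $f(F(k-1)+1) = k$) are essential, and one must verify the inequality $s \le F(k)$, handling separately the degenerate case $F(k) = +\infty$, where $f$ is eventually constant equal to $k$ and $t$ may be any integer. Everything else is a direct application of the lemmas, so the remaining verifications are routine.
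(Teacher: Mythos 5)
Your proposal is correct and follows essentially the same route as the paper: the upper bound by applying \cref{l:balancedS} to every induced subgraph and invoking \cref{th:DvNo}, and the lower bound by taking the first $\max\{t, F(k-1)+1\}$ layers, pinning down the clique number via \cref{l:omega} and the treewidth via the clique minor of \cref{l:layersCM} and \cref{l:Kminor}. Your extra care about $f(F(k-1)+1)=k$ and $s\leq F(k)$ is exactly the verification the paper performs implicitly, so nothing further is needed.
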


\begin{proof}
  Let us prove the first statement. Let $H'$ be an induced subgraph
  of~$H$.  Set $k= \omega(H')$.  By \cref{l:balancedS}, $H'$ has a
  balanced separation of order at most
  $2F(k+1) + (\ell +1) k -2 \leq 2F(\omega(H)+1) + (\ell +1) \omega(H)
  -2$.  Hence by \cref{th:DvNo},
  $$\tw(H)\leq  15 \left(2 F(\omega(H)+1) + (\ell +1) \omega(H) - 2\right).$$

  To prove the second statement, set $t' = \max(F(k-1) +1, t)$.
  Consider the graph $H$ induced by layers $L_1, \dots, L_{t'}$ of
  $G$.  Since $F(k-1)+1 \leq t' \leq F(k)$ (because $t\leq F(k)$),
  $f(t') = k$.  So by \cref{l:omega}, $\omega(H) = k$.  By
  \cref{l:layersCM}, $L_1, \dots, L_{t'}$ forms a clique minor of $H$.
  Hence, since $t'\geq t$, by \cref{l:Kminor}, $\tw(H) \geq t-1$.
\end{proof}

Observe that when $F(k)$ is finite and $F(k+1)$ is infinite,
\cref{th:tw} does not tell us whether the treewidth of induced
subgraphs of $G$ with clique number exactly $k$ is bounded or
not.  This is why we do not know the answer to \cref{q:hw}.

\section{Applications of layered wheels}
\label{sec:app}

Recall that when $f$ is a slow function, we set
$$F(k) = \sup\{i \in \mathbb N\setminus\{0\} \mid f(i) \leq k\}.$$  Call $F$ the
\emph{cumulative function of $f$}.  Recall that informally, $f(i)$
tells us what size of clique is obtained when adding the layer $L_i$.  This
number is 1 at the start, then 2, then 3, and then it grows by at
most 1 at each new layer.  Informally, $F(k)$ is the number of layers
where the clique number is at most~$k$.  Since $f$ is slow, we have
\[
\begin{array}{lc}
  F:\mathbb N\setminus\{0\} \rightarrow \mathbb N\setminus\{0\} \cup \{+\infty\},&\\
  F(1)=1, F(2)=2\text{ and}&\text{\rule{1cm}{0cm}}(\star)\\
  F(k+1) \geq F(k) +1\text{ for all }k\in \mathbb N\setminus\{0\}.&
 \end{array}
\]

It is clear that the $(f, \ell)$-layered wheel could be defined by giving $F$
instead of~$f$.  This one-to-one correspondance between $f$ and $F$
could be formalized by the fact that for all $i\in \mathbb N\setminus\{0\}$, we have
$$f(i) = \min \{k \in \mathbb N\setminus\{0\} \mid  F(k) \geq i\},$$  but we do not
need this.  We will just use freely the fact that any
slow function $f$ can be defined by describing its corresponding
cumulative function~$F$, provided that $F$
 satisfies the property $(\star)$.

\medskip

The following theorem answers \cref{Q:allFunctions}. 

\begin{theorem}
  \label{th:allFunctions}
  For every function $g: \mathbb N\setminus\{0\} \rightarrow \mathbb N\setminus\{0\}$ and
  every integer $\ell \geq 4$, there exists a $(\tw, \omega)$-bounded
  class of graphs $\cal C$ such that every hole in $\cal C$ has length
  at least $\ell$ and for all integers $k\geq 2$, there exists a graph
  $H\in \cal C$ satisfying
  $$\omega(H) = k \text{ and } \tw(H)  \geq g(k).$$
\end{theorem}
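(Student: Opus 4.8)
The plan is to construct the desired class $\cal C$ by choosing the slow function $f$ appropriately and taking $\cal C$ to be the class of all finite induced subgraphs of the $(f,\ell)$-layered wheel $G$. The key insight is that \cref{th:tw} already packages everything needed: its first bullet gives a $(\tw,\omega)$-bound in terms of the cumulative function $F$, and its second bullet guarantees induced subgraphs of prescribed clique number with large treewidth, provided $F$ grows fast enough. So the entire proof reduces to designing $F$ (equivalently $f$) so that these two bullets deliver simultaneously the required bound and the required unboundedness.

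The main step is to define $F$ directly, exploiting the freedom granted by property $(\star)$. I would set $F(1)=1$, $F(2)=2$, and for each $k\geq 2$ choose $F(k+1)$ large enough that $F(k+1)\geq F(k)+1$ (forced by $(\star)$) and also $F(k+1)\geq g(k+1)+1$, say $F(k+1)=\max\{F(k)+1,\; g(k+1)+1\}$. This keeps every $F(k)$ finite while making $F(k)$ exceed $g(k)$. Since $(\star)$ is the only constraint and this recursion respects it, there is a corresponding slow function $f$, and I take $\cal C$ to be the finite induced subgraphs of the $(f,\ell)$-layered wheel. By \cref{l:lengthHoles}, every hole in $G$ — and hence in every member of $\cal C$ — has length at least $\ell$, settling the girth-type condition immediately.

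Next I would verify the two required properties. For $(\tw,\omega)$-boundedness, the first bullet of \cref{th:tw} gives, for every $H\in\cal C$, the bound $\tw(H)\leq 15\bigl(F(\omega(H)+1)+(\ell+1)\omega(H)-2\bigr)$; since each $F(k)$ is finite, the right-hand side is a genuine function of $\omega(H)$ alone, so $\cal C$ is $(\tw,\omega)$-bounded. For the large-treewidth graphs, fix $k\geq 2$. Because $F(k-1)$ is finite and, by construction, $F(k)\geq g(k)+1 > g(k)$, the value $t=g(k)+1$ satisfies $t\leq F(k)$, so the second bullet of \cref{th:tw} yields a finite induced subgraph $H\in\cal C$ with $\omega(H)=k$ and $\tw(H)\geq t-1=g(k)$, exactly as required.

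The only genuine subtlety — and the step I would handle most carefully — is the bookkeeping around the hypotheses of the second bullet, namely that $F(k-1)$ must be finite and that $t\leq F(k)$; both hold by the explicit recursive definition, but one must check the boundary case $k=2$ (where $F(1)=1$ is finite) and confirm that defining $F$ via the recursion does produce a legitimate slow function, i.e.\ that the induced $f$ indeed satisfies $f(1)=1,f(2)=2,f(3)=3$ and increments by at most one. This is routine given the correspondence between $f$ and $F$ recorded after $(\star)$, so I expect no real obstacle — the theorem is essentially a matter of selecting $F$ to dominate $g$ while keeping all its values finite.
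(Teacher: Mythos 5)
Your construction of $F$ and your use of \cref{th:tw} match the paper's proof for every $k\geq 3$, but there is a genuine gap at $k=2$, and it is not the one you flagged. The definition of a slow function forces $f(2)=2$ and $f(3)=3$, hence $F(2)=\sup\{i\mid f(i)\leq 2\}=2$; property $(\star)$ gives you no freedom here, so your claim that ``by construction $F(k)\geq g(k)+1$'' fails for $k=2$ whenever $g(2)\geq 2$. Consequently the second bullet of \cref{th:tw}, which requires $t\leq F(k)$, only yields $t\leq 2$ and hence a triangle-free induced subgraph of treewidth at least $1$ --- nowhere near $g(2)$. The hypothesis you checked for the boundary case (finiteness of $F(1)$) is satisfied but irrelevant; the binding constraint is $t\leq F(2)=2$.

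The paper closes exactly this hole by enlarging the class: it takes $\cal C$ to be the finite induced subgraphs of the layered wheel \emph{together with} those of an auxiliary triangle-free graph $J$ of treewidth at least $g(2)$ (a wall, or a (theta, triangle)-free layered wheel from~\cite{DBLP:journals/jgt/SintiariT21}, chosen so that its holes also have length at least $\ell$). Adding $J$ keeps the class hereditary and $(\tw,\omega)$-bounded and supplies the required graph for $k=2$. Your argument needs this (or some other source of triangle-free graphs of treewidth at least $g(2)$ with long holes); the layered wheel alone, via \cref{th:tw}, cannot provide it.
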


\begin{proof}
  Consider a graph $J$ of girth at least $\ell$ whose treewidth is at
  least $g(2)$ ($J$~can be a subdivision of a wall, or a (theta,
  triangle)-free layered wheel as defined
  in~\cite{DBLP:journals/jgt/SintiariT21}).  Set $F(1)=1$, $F(2)=2$
  and for all integers $k \geq 3$,
  $$F(k) = \max \{F(k-1) +1, g(k) +1\}.$$  Consider the slow function $f$
  whose cumulative function is~$F$.  It exists since $F(1)=1$,
  $F(2)=2$, and $F(k+1) \geq F(k) +1$ for all $k\in \mathbb
  N\setminus\{0\}$. Also, for all integers $k\geq 3$, $F(k)\geq g(k) +1$.

  Consider the class $\cal C$ of all finite induced subgraphs of
  either $J$ or the $(f, \ell)$-layered wheel $G$.  By
  \cref{l:lengthHoles}, every hole in $\cal C$ has length
  at least $\ell$.  By \cref{th:tw}, $\cal C$ is
  $(\tw, \omega)$-bounded.  For all integers $k\geq 3$, the second
  conclusion of \cref{th:tw} for $t=F(k)$ yields a graph $H\in \cal C$
  such that $\omega(H) = k$ and $\tw(H) \geq F(k) -1 \geq g(k)$.  For
  $k=2$, a graph $H\in \cal C$ such that $\omega(H) = k$ and
  $\tw(H) \geq g(k)$ also exists since $J\in \cal C$.
\end{proof}

The following theorem disproves \cref{conj:dallardMiSto}.  A function
$F:\mathbb N\setminus\{0\}\rightarrow \mathbb N\setminus\{0\}$ is
\emph{super-linear} if for every $c>0$, there exists an integer
$k\geq 1$ such that for all $x\geq k$, $F(x) > cx$.

\begin{theorem}
  \label{th:dallardMiSto}
  Let $\ell\geq 4$ be an integer and
  $F:\mathbb N\setminus\{0\}\rightarrow \mathbb N\setminus\{0\}$ be
  any super-linear function such that $F(1)=1$, $F(2)=2$, and
  $F(k+1) \geq F(k) +1$ for all $k\in \mathbb N\setminus\{0\}$.  Then
  there exists a hereditary class of graphs $\cal C$ such that every
  hole in $\cal C$ has length at least~$\ell$, $\cal C$ contains
  graphs of arbitrarily large tree-independence number and every
  $H \in \cal C$ satisfies
  $$\tw(H) \leq 15 \left(2F(\omega(H)+1)   + (\ell
    +1) \omega(H) -2\right).$$
\end{theorem}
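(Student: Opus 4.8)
The plan is to reuse, almost verbatim, the class built in the proof of \cref{th:allFunctions}: let $f$ be the slow function whose cumulative function is $F$, let $G$ be the $(f,\ell)$-layered wheel, and take $\cal C$ to be the class of all finite induced subgraphs of $G$. Such an $f$ exists precisely because the hypotheses on $F$ are exactly property $(\star)$. The class $\cal C$ is hereditary by construction, every hole has length at least $\ell$ by \cref{l:lengthHoles}, and the displayed inequality is exactly the first conclusion of \cref{th:tw} applied to each $H\in\cal C$. Thus the entire content of the theorem reduces to one new point: that $\cal C$ contains graphs of arbitrarily large tree-independence number, and this is where the super-linearity of $F$ enters.

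To produce such graphs, I would fix $k$, set $t=F(k)$, and consider $H=G[L_1\cup\cdots\cup L_t]$. Since $F(k)\ge k$ (as $F(1)=1$ and $F(k+1)\ge F(k)+1$), we have $t\ge 2$, and by \cref{l:omega} together with the definition of $F$ one gets $\omega(H)=f(t)=k$. By \cref{l:layersCM} the layers $L_1,\dots,L_t$ form a clique minor of $H$. Now I would take an arbitrary tree decomposition $\mathcal T$ of $H$: by \cref{l:Kminor} some bag $X_s$ meets every layer $L_1,\dots,L_t$, so inside $X_s$ one can select a transversal $T$ of size exactly $t$ consisting of one vertex per layer, with $T\subseteq X_s$.

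The crux is to lower-bound $\alpha(G[T])$. Because $T$ contains at most one vertex in each layer, \cref{l:chordal} tells us that $G[T]$ is chordal, whence \cref{th:ch} gives $\chi(G[T])=\omega(G[T])\le \omega(H)=k$. A graph on $t$ vertices with chromatic number at most $k$ has a color class, hence an independent set, of size at least $t/k$, so $\alpha(G[T])\ge t/k=F(k)/k$. Since $T\subseteq X_s$, the independent width of $\mathcal T$ is at least $\alpha(G[X_s])\ge\alpha(G[T])\ge F(k)/k$ by \cref{l:Kminor}. As $\mathcal T$ was arbitrary, this yields $\ta(H)\ge F(k)/k$.

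Finally, super-linearity of $F$ means that $F(k)/k$ is unbounded, so letting $k\to\infty$ produces members $H\in\cal C$ of arbitrarily large tree-independence number, which completes the argument. The genuinely delicate — and essentially the only new — step is the transversal argument of the third paragraph: it is the chordality of an arbitrary one-vertex-per-layer selection (\cref{l:chordal}) combined with $\chi=\omega$ for chordal graphs that turns the clique-number ceiling $k$ into the independence lower bound $t/k$. This is exactly the place where super-linearity of $F$ is indispensable and explains why the linearly-bounded classes of \cref{th:allFunctions} do not contradict \cref{conj:dallardMiSto}, while the super-linear ones do.
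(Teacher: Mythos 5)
Your proposal is correct and follows essentially the same route as the paper: same class $\cal C$ built from the slow function with cumulative function $F$, same reduction of the new content to the tree-independence bound, and the same key step of extracting a one-vertex-per-layer transversal from a bag guaranteed by \cref{l:Kminor}, then applying \cref{l:chordal} and \cref{th:ch} to get an independent set of size $F(k)/k$. The only cosmetic difference is that the paper fixes the target $c$ first and picks $k$ with $F(k)\geq ck$, whereas you derive the bound $\ta(H)\geq F(k)/k$ for every $k$ and then invoke unboundedness; these are equivalent.
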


\begin{proof}
  Consider the slow function $f$ whose cumulative function is $F$.
  Let~$G$ be the $(f, \ell)$-layered wheel and $\cal C$ be the class
  of finite induced subgraphs of~$G$.  By~\cref{l:lengthHoles}, the holes
  in $\cal C$ all have length at least $\ell$.  By \cref{th:tw}, the
  required bound on the treewidth holds.

  It remains to prove that $\cal C$ contains graphs of arbitrarily
  large tree-independence number.  So let $c\geq 1$ be an integer.
  Since $F$ is super-linear, let $k\geq 2$ be such that
  $F(k) \geq ck$.  Consider the graph $H$ induced by the layers
  $L_1, \dots, L_{F(k)}$ of $G$.  By \cref{l:omega}, $\omega(H) = k$
  and by \cref{l:layersCM}, the layers $L_1, \dots, L_{F(k)}$ form a
  clique minor of $H$.

  Consider any tree-decomposition
  $\mathcal T = (T, (X_s)_{s\in V (T )})$ of $H$.  By \cref{l:Kminor},
  there exists a node $s\in V(T)$ such that $X_s$ contains at least
  one vertex of each $L_i$, $i\in \{1, \dots, F(k)\}$.  Consider a
  subset $Y$ of $X_s$ that contains exactly one vertex in each layer
  $L_i$, $i\in \{1, \dots, F(k)\}$.  We have $|Y| = F[k]$ and
  $\omega(H[Y]) \leq k$.  By \cref{l:chordal}, $H[Y]$ is chordal.
  Hence by \cref{th:ch},
  $$\alpha(H[X_s]) \geq \alpha(H[Y]) \geq \frac{|Y|}{\chi(H[Y])} = \frac{F(k)}{\omega(H[Y])} \geq
  \frac{ck}{k} = c.$$

  Hence, for all tree-decompositions of $H$, some bag contains a
  stable of size at least $c$.  It follows that $\ta(H) \geq c$.  Since this
  can be performed for any integer $c$, $\cal C$ contains graph of arbitrarily
  large tree-independence number.
\end{proof}

By allowing $f$ to be eventually constant (or equivalently by allowing
infinite values of $F$), we obtain the following, that disproves 
\cref{conj:hajebiConj1} and hence also \cref{conj:hajebiConj2}.

\begin{theorem}
  \label{th:hajebiConj}
  For all integers $\ell \geq 5$, $c\geq 2$ and $t\geq 1$, there
  exists a graph $G$ of treewidth at least $t$ such that $\omega(G) \leq c+1$,
  every hole of $G$ has length at least $\ell$ (in particular, $G$
  contains no complete bipartite graph of treewidth at least~2) and
  every $K_c$-free induced subgraph of $G$ (in particular every
  $(c-2)$-degenerate induced subgraph of $G$) has treewidth at most
  $15 (2c + (\ell +1) (c-1) -2)$.
\end{theorem}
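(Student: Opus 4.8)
The plan is to instantiate the construction with a slow function $f$ that is eventually constant equal to $c+1$, and then to tune the number of vertices so as to hit treewidth exactly $t$.

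First I would take $f(i) = \min(i, c+1)$. This is slow: it satisfies $f(1)=1$, $f(2)=2$, $f(3)=3$ (using $c\geq 2$), and it increases by at most one at each step; its cumulative function satisfies $F(k)=k$ for $1\leq k\leq c$ and $F(c+1)=+\infty$. Let $\hat G$ be the $(f,\ell)$-layered wheel. Three of the required properties hold for \emph{every} finite induced subgraph $H$ of $\hat G$, regardless of how many vertices we keep. By \cref{l:lengthHoles} every hole of $\hat G$, hence of $H$, has length at least $\ell\geq 5$; in particular $\hat G$ has no induced $C_4$, so it contains no complete bipartite graph with both sides of size at least $2$, i.e.\ none of treewidth at least $2$. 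Next, since $f(i)\leq c+1$ for all $i$ and every finite $H$ lies inside some $\hat G[L_1\cup\dots\cup L_i]$, \cref{l:omega} gives $\omega(H)\leq f(i)\leq c+1$, while a $K_{c+1}$ already occurs in $L_1\cup\dots\cup L_{c+1}$. Finally, if $H$ is $K_c$-free then $\omega(H)\leq c-1$, so $F(\omega(H)+1)\leq F(c)=c$ by monotonicity of $F$, and the first part of \cref{th:tw} yields $\tw(H)\leq 15(c+(\ell+1)(c-1)-2)$; the parenthetical follows since a $(c-2)$-degenerate graph cannot contain $K_c$.

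It then remains to produce a single finite induced subgraph $G$ of $\hat G$ with $\omega(G)=c+1$ and $\tw(G)=t$; note that $t\geq c$ is forced, since any graph with $\omega=c+1$ has treewidth at least $c$. By \cref{l:omega} fix a clique $K$ of size $c+1$ in $\hat G$, and enumerate $V(\hat G)=\{u_1,u_2,\dots\}$ so that $K=\{u_1,\dots,u_{c+1}\}$ and the remaining vertices are listed layer by layer. Put $G_n=\hat G[\{u_1,\dots,u_n\}]$ for $n\geq c+1$. Every $G_n$ contains $K$ and is an induced subgraph of $\hat G$, so $\omega(G_n)=c+1$. The sequence $\tw(G_n)$ is nondecreasing and, since inserting one vertex into every bag of an optimal decomposition raises the width by at most one, satisfies $\tw(G_{n+1})\leq \tw(G_n)+1$; moreover $\tw(G_{c+1})=\tw(K_{c+1})=c$. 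As $n\to\infty$ the sets $\{u_1,\dots,u_n\}$ eventually contain $L_1\cup\dots\cup L_m$ for every $m$, and the layers form a clique minor by \cref{l:layersCM}, so \cref{l:Kminor} forces $\tw(G_n)\to\infty$. A sequence of integers that starts at $c$, increases in steps of at most one, and tends to infinity attains every value at least $c$; hence some $G_n$ has treewidth exactly $t$, and we set $G=G_n$.

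The only genuinely delicate point is this last one: achieving treewidth exactly $t$ rather than merely at least $t$. Everything else is bookkeeping on top of \cref{th:tw}, \cref{l:lengthHoles}, \cref{l:omega}, \cref{l:layersCM} and \cref{l:Kminor}. The exact-value step relies on the standard facts that treewidth is monotone under induced subgraphs and that adding one vertex increases treewidth by at most one, combined with the discrete intermediate value principle; the clique $K$ is held fixed throughout the enumeration precisely so that $\omega$ stays pinned at $c+1$ while $\tw$ is dialed up to $t$.
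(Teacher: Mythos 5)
Your proof is correct, and its core is the same as the paper's: the same choice $f(i)=\min\{i,c+1\}$, giving $F(c)=c$ and $F(c+1)=+\infty$, the clique-minor lower bound via \cref{l:layersCM} and \cref{l:Kminor}, and the upper bound on $K_c$-free induced subgraphs via $F(\omega(H)+1)\leq F(c)=c$ and the first part of \cref{th:tw}. Where you genuinely diverge is at the end: the paper simply takes $G=\hat G[L_1\cup\dots\cup L_{t+1}]$ and proves only $\tw(G)\geq t$ and $\omega(G)\leq c+1$, reading the statement's ``treewidth $t$'' as a lower bound, whereas you pin down the exact values with the vertex-by-vertex enumeration, the facts that treewidth is monotone and increases by at most one per added vertex, and the discrete intermediate value principle. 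That extra step is sound and buys a strictly stronger conclusion (treewidth exactly $t$ and $\omega$ exactly $c+1$); it also surfaces a real point the paper glosses over, namely that the literal statement can only hold when $t\geq c$, since $\omega(G)=c+1$ forces $\tw(G)\geq c$ (the paper's version of $G$ has $\omega=\min(t+1,c+1)$, so its claim $\omega(G)=c+1$ likewise needs $t\geq c$). For the intended application --- disproving \cref{conj:hajebiConj1} and \cref{conj:hajebiConj2}, which only require graphs of arbitrarily large treewidth with the stated exclusion properties --- the paper's weaker reading suffices, but your argument is the one that actually delivers the theorem as written (for $t\geq c$).
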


\begin{proof}
  Let $f$ be the function defined by $f(i)=\min \{i, c+1\}$.  Hence, $f$
  is slow and the cumulative function $F$ of $f$ satisfies $F(c) = c$
  and $F(c+1) = +\infty$.  Let $G$ be the graph induced by the layers
  $L_1$, \dots, $L_{t+1}$ of the $(f, \ell)$-layered wheel.  By
  \cref{l:omega}, $\omega(G) \leq  c+1$.  Since the layers $L_1$, \dots,
  $L_{t+1}$ form a clique minor of $G$ by \cref{l:layersCM}, $G$ has treewidth at
  least $t$ by \cref{l:Kminor}.

  Let $H$ be a $K_c$-free induced subgraph of $G$.  So
  $\omega(H) \leq c-1$.  By \cref{th:tw} and since $F(c)=c$,
  $\tw(H)\leq 15 (2c  + (\ell +1) (c-1) -2)$.  So $G$
  satisfies the conclusion. 
\end{proof}

\section*{Acknowledgement}

A first version of the $(f, \ell)$-layered wheel was presented at the
Graph Theory Workshop, organized by Sergey Norin, Paul Seymour and
David Wood at the Bellairs Research Institute, Barbados, in March
2024.  The discussions that arose from the discovery of a flaw in this
version were very helpful, in particular with Édouard Bonnet, Rose
McCarthy, Sergey Norin and Stéphan Thomassé.  We are grateful to
Sepehr Hajebi for useful discussions about bounding the treewidth by a
function of $\omega$ and Julien Duron for suggesting key ideas about
how to find a balanced separation.  Thanks to Hugo Jacob for pointing
out to us the construction
from~\cite{DBLP:journals/siamcomp/CorneilR05}. 

Maria Chudnovsky is supported by NSF-EPSRC Grant DMS-2120644, 
AFOSR grant FA9550-22-1-0083 and NSF Grant DMS-2348219.

Nicolas Trotignon is partially supported by the French National
Research Agency under research grant ANR DIGRAPHS ANR-19-CE48-0013-01
and the LABEX MILYON (ANR-10-LABX-0070) of Université de Lyon, within
the program Investissements d’Avenir (ANR-11-IDEX-0007) operated by
the French National Research Agency (ANR).

Part of this work was done when Nicolas Trotignon visited Maria
Chudnovsky at Princeton University with generous support of the
H2020-MSCA-RISE project CoSP- GA No. 823748.

\bibliographystyle{plain}
\bibliography{f-LayeredWheels}

\begin{thebibliography}{10}

\bibitem{DBLP:journals/jgt/AbrishamiACHSV24a}
Tara Abrishami, Bogdan Alecu, Maria Chudnovsky, Sepehr Hajebi, Sophie Spirkl,
  and Kristina Vuskovic.
\newblock Tree independence number {I}. (even hole, diamond, pyramid)-free
  graphs.
\newblock {\em J. Graph Theory}, 106(4):923--943, 2024.

\bibitem{abrishamiEtAl:twIII}
Tara Abrishami, Maria Chudnovsky, Sepehr Hajebi, and Sophie Spirkl.
\newblock Induced subgraphs and tree decompositions {III}.
\newblock {\em Advances in Combinatorics}, 2022.
\newblock https://doi.org/10.19086/aic.2022.6.

\bibitem{DBLP:journals/siamcomp/CorneilR05}
Derek~G. Corneil and Udi Rotics.
\newblock On the relationship between clique-width and treewidth.
\newblock {\em {SIAM} Journal on Compututing}, 34(4):825--847, 2005.

\bibitem{dallardMS:tw1}
Cl{\'{e}}ment Dallard, Martin~Milani\v c, and Kenny {\v S}torgel.
\newblock Treewidth versus clique number. {I.} {G}raph classes with a forbidden
  structure.
\newblock {\em SIAM Journal on Discrete Mathematics}, 35(4):2618--2646, 2021.

\bibitem{DBLP:journals/jctb/DallardMS24}
Cl{\'{e}}ment Dallard, Martin~Milani\v c, and Kenny {\v S}torgel.
\newblock Treewidth versus clique number. {II.} {T}ree-independence number.
\newblock {\em Journal of Combinatorial Theory, Series {B}}, 164:404--442,
  2024.

\bibitem{DMS_JCTB2024}
Clément Dallard, Martin Milanič, and Kenny {\v S}torgel.
\newblock Treewidth versus clique number. {III}. {T}ree-independence number of
  graphs with a forbidden structure.
\newblock {\em Journal of Combinatorial Theory, Series {B}}, 167:338--391,
  2024.

\bibitem{dirac:chordal}
Gabriel~Andrew Dirac.
\newblock On rigid circuit graphs.
\newblock {\em Abhandlungen aus dem Mathematischen Seminar der Universit{\"a}t
  Hamburg}, 25:71--76, 1961.

\bibitem{DBLP:journals/jct/DvorakN19}
Zdenek Dvor{\'{a}}k and Sergey Norin.
\newblock Treewidth of graphs with balanced separations.
\newblock {\em Journal of Combinatorial Theory, Series {B}}, 137:137--144,
  2019.

\bibitem{hajebi:conj}
Sepehr Hajebi.
\newblock Chordal graphs, even-hole-free graphs and sparse obstructions to
  bounded treewidth.
\newblock arXiv:2401.01299, 2024.

\bibitem{DBLP:journals/jct/RobertsonS86}
Neil Robertson and Paul Seymour.
\newblock Graph minors. {V}. {E}xcluding a planar graph.
\newblock {\em Journal of Combinatorial Theory, Series {B}}, 41(1):92--114,
  1986.

\bibitem{rose:triangulated}
Donald~J. Rose.
\newblock Triangulated graphs and the elimination process.
\newblock {\em Journal of Mathematical Analysis and Applications},
  32(3):597---609, 1970.

\bibitem{DBLP:journals/jgt/SintiariT21}
Ni~Luh~Dewi Sintiari and Nicolas Trotignon.
\newblock {(Theta, triangle)-free and (even hole, K\({}_{\mbox{4}}\))-free
  graphs - Part 1: Layered wheels}.
\newblock {\em Journal of Graph Theory}, 97(4):475--509, 2021.

\bibitem{doi:10.1137/1.9781611975031.16}
Nikola Yolov.
\newblock Minor-matching hypertree width.
\newblock In {\em Proceedings of the 2018 Annual ACM-SIAM Symposium on Discrete
  Algorithms (SODA)}, pages 219--233. SIAM, 2018.

\end{thebibliography}

\end{document}